\documentclass[11pt]{article}
\usepackage[T1]{fontenc}
\usepackage{lmodern}
\usepackage{amsmath,amssymb,amsthm,mathtools}
\usepackage[margin=1in]{geometry}
\usepackage{booktabs,array,algorithm,algpseudocode}
\usepackage{graphicx,float}
\usepackage{placeins}
\usepackage[numbers,sort&compress]{natbib}
\usepackage{microtype,hyperref}
\hypersetup{colorlinks=true,linkcolor=blue,citecolor=blue,urlcolor=blue,
 pdftitle={Accelerating Finite-Sum Bilevel Optimization with Fully First-Order Oracles}}
\newtheorem{theorem}{Theorem}[section]
\newtheorem{lemma}[theorem]{Lemma}
\newtheorem{proposition}[theorem]{Proposition}

\newtheorem{assumption}{Assumption}
\theoremstyle{remark}

\newcommand{\E}{\mathbb E}
\newcommand{\R}{\mathbb R}
\newcommand{\norm}[1]{\lVert #1\rVert}
\newcommand{\ip}[2]{\langle #1,#2\rangle}
\newcommand{\Ot}{\widetilde{\mathcal O}}

\newcommand{\Lip}{\operatorname{Lip}}
\DeclareMathOperator*{\argmin}{arg\,min}
\algrenewcommand\algorithmicrequire{\textbf{Input:}}
\algrenewcommand\algorithmicensure{\textbf{Output:}}
\title{Accelerating Finite-Sum Bilevel Optimization\\with Fully First-Order Oracles}
\author{
  Qihao Zhou\thanks{Independent Researcher. Corresponding author. Email: \texttt{qihao\_zhou@berkeley.edu}.}
  \and
  Shanshui Niu\thanks{Independent Researcher. Email: \texttt{shanshun@alumni.cmu.edu}.}
}
\date{}
\newcommand{\draftfigure}[1]{%
 \IfFileExists{#1}{\includegraphics[width=\textwidth]{#1}}{%
  \fbox{\parbox[c][32mm][c]{0.95\linewidth}{\centering
  Figure file not supplied\\[4pt]\texttt{\detokenize{#1}}}}}%
}
\begin{document}
\maketitle
\vspace{-1.5em}
\begin{abstract}
We study nonconvex--strongly-convex bilevel optimization with finite-sum objectives and fully first-order oracles. We propose Sampled Implicit Corrections for Bilevel Optimization (SICBO), which combines a penalty approximation with variance-reduced corrections for the lower-level variables. Under componentwise lower strong convexity, SICBO finds a point $x$ satisfying $\E\norm{\nabla\Phi(x)}^2\le\varepsilon^2$ within $\Ot(N+\sqrt N\kappa^{7/2}\varepsilon^{-2})$ component-gradient calls, where $N$ is the total number of components and $\kappa$ is the normalized lower-level condition number. The bound achieves square-root finite-sum dependence using only gradients and preserves the best known deterministic fully first-order condition-number exponent for smooth problems. Our improvement follows from a curvature-dependent bound for nonlinear correction errors, which permits epochs of length $\widetilde\Theta(\sqrt{N/\kappa})$ with batches of size $\widetilde\Theta(\sqrt{N\kappa})$. A single-level reduction shows that the dependence on $N$ and $\varepsilon$ is optimal up to logarithmic factors for fixed $\kappa$.
\end{abstract}

\section{Introduction}\label{sec:intro}
We study the finite-sum bilevel optimization problem
\begin{equation}\label{eq:problem}
 \min_{x\in\R^{d_x}}\Phi(x):=f(x,y^*(x)),\qquad
 y^*(x):=\argmin_{y\in\R^{d_y}}g(x,y),
\end{equation}
where $f(x,y)=n^{-1}\sum_{i=1}^n f_i(x,y)$ and
$g(x,y)=m^{-1}\sum_{j=1}^m g_j(x,y)$. The lower objective is strongly convex in $y$, and $\Phi$ may be nonconvex. Such problems arise when an upper variable is selected through the solution of a training problem. We write $N=n+m$ and count individual component-gradient evaluations.

The gradient of $\Phi$, called the hypergradient, involves the inverse of the lower Hessian. Implicit differentiation methods therefore typically require Hessian-vector products. Fully first-order methods approximate the hypergradient through differences of gradients. The penalty approach of \citet{kwon} and \citet{cmz}, combined with accelerated inner minimization, gives a deterministic complexity of $\Ot(\kappa^{7/2}\varepsilon^{-2})$ \citep{cjz}.

Implementing these deterministic methods on \eqref{eq:problem} by full gradients adds a factor $N$. Accelerated finite-sum solvers reduce the inner minimization cost, but a full pass at every outer iteration still contributes $N\kappa^3\varepsilon^{-2}$. This term exceeds the desired finite-sum rate when $N>\kappa$.

In this paper, we propose Sampled Implicit Corrections for Bilevel Optimization (SICBO), which tracks the two inner solutions with sampled corrections. The key step is to control sampling errors through these nonlinear corrections in curvature-dependent norms. The resulting epoch estimate gives the complexity
\begin{equation}\label{eq:headline}
 \Ot\!\left(N+\sqrt N\kappa^{7/2}\varepsilon^{-2}\right).
\end{equation}
The bound combines gradient-only access, square-root finite-sum dependence, and the $\kappa^{7/2}$ exponent of the accelerated deterministic penalty method. The analysis uses curvature-weighted concentration at population endpoints and predictable energy peeling to control the error over each epoch. We also prove a lower bound matching the dependence on $N$ and $\varepsilon$ up to logarithms for fixed $\kappa$; a matching bound in $\kappa$ remains open. On a synthetic finite-sum family, SICBO uses the fewest component-gradient calls among the methods in Table~\ref{tab:complexity}, with an advantage over the full-refresh benchmark that decreases with $\kappa$ as predicted by the amortized cost (Section~\ref{sec:numerics}).

\section{Preliminaries}\label{sec:prelim}
We use $\norm\cdot$ for the Euclidean norm of vectors and the induced operator norm of matrices. For a positive definite matrix $H$, let $\norm u_H^2=u^\top Hu$. We write $a\lesssim b$ to suppress an absolute constant and $\Ot$ to suppress logarithmic factors. All polynomial dependence on $N$, $\kappa$, and $\varepsilon$ is explicit. We impose the following assumptions on the whole space.

\begin{assumption}[Component smoothness]\label{ass:smooth}
Each $f_i$ has a $1$-Lipschitz continuous gradient. Each $g_j$ is twice continuously differentiable, its gradient is $1$-Lipschitz continuous, and its Hessian is $1$-Lipschitz continuous. These Lipschitz conditions apply jointly to $(x,y)$.
\end{assumption}
\begin{assumption}[Lower strong convexity]\label{ass:sc}
For every $x$ and $j$, the function $g_j(x,\cdot)$ is $\mu$-strongly convex, where $0<\mu\le1$. We define $\kappa=1/\mu$.
\end{assumption}
\begin{assumption}[Upper gradient and initial gap]\label{ass:gap}
The average objective satisfies $\norm{\nabla_y f(x,y)}\le1$ for all $(x,y)$. Furthermore, $\Phi$ is bounded below and the initial point satisfies $\Phi(x_0)-\inf_x\Phi(x)\le\Delta$, where a known upper bound $\Delta=O(1)$ is available.
\end{assumption}
The unit bounds in Assumptions~\ref{ass:smooth} and \ref{ass:gap} define the normalized class for our condition-number dependence; no powers of other smoothness parameters are suppressed. Componentwise strong convexity makes every sampled correction strongly convex and permits accelerated finite-sum solves. Strong convexity of the average $g$ alone does not ensure this property. We require neither bounded component-gradient variance nor Hessian smoothness of $f_i$.

Our goal is to produce a random point $x_{\rm out}$ such that
\begin{equation}\label{eq:criterion}
 \E\norm{\nabla\Phi(x_{\rm out})}^2\le\varepsilon^2,
 \qquad 0<\varepsilon\le1.
\end{equation}
One oracle call returns $\nabla f_i(x,y)$ or $\nabla g_j(x,y)$ at a specified index and point, so a full gradient costs $O(N)$ calls. The oracles are exact; arithmetic precision is excluded from the complexity.

For $\lambda\ge2\kappa$, define the penalty value function
\begin{equation}\label{eq:penalty}
 H_\lambda(x):=\min_y\{f(x,y)+\lambda g(x,y)\}
                 -\lambda\min_z g(x,z).
\end{equation}
The following lemma gives the approximation error and smoothness of $H_\lambda$. Its proof appears in Appendix~\ref{app:fields}.
\begin{lemma}[Penalty approximation]\label{lem:penalty}
Under Assumptions~\ref{ass:smooth}--\ref{ass:gap},
\begin{equation}\label{eq:penalty-bounds}
\begin{aligned}
 \norm{\nabla H_\lambda(x)-\nabla\Phi(x)}&\le C\kappa^3/\lambda,
 \qquad \Lip(\nabla H_\lambda)\le C_H\kappa^3,\\
 \Phi(x)-\kappa/(2\lambda)&\le H_\lambda(x)\le\Phi(x).
\end{aligned}
\end{equation}
In particular, the initial gap of $H_\lambda$ is at most
$\bar\Delta_H:=\Delta+\kappa/(2\lambda)=O(1)$.
\end{lemma}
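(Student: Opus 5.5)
Write $y^*_\lambda(x)=\argmin_y\{f(x,y)+\lambda g(x,y)\}$ and $y^*(x)=\argmin_z g(x,z)$. For $\lambda\ge2\kappa$, the function $f+\lambda g$ is $(\lambda\mu-1)\ge\lambda\mu/2$-strongly convex in $y$, so $y^*_\lambda$ is unique. I would follow the standard penalty analysis of \citet{kwon,cmz}, keeping every constant in the normalized units. The plan has four steps in order: the value sandwich, the proximity bound $\norm{y^*_\lambda-y^*}$, the gradient error, and the Lipschitz bound.

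\emph{Value bounds.} The upper bound comes from choosing $y=y^*(x)$ in the first minimum, which gives $H_\lambda(x)\le f(x,y^*(x))=\Phi(x)$. For the lower bound, strong convexity gives
\[
g(x,y)-g(x,y^*)\ge\tfrac\mu2\norm{y-y^*}^2,
\]
and $\norm{\nabla_y f}\le1$ gives $f(x,y)\ge\Phi(x)-\norm{y-y^*}$. Hence
\[
H_\lambda(x)\ge\Phi(x)+\min_r\{-r+\lambda\mu r^2/2\}=\Phi(x)-\kappa/(2\lambda).
\]
The initial gap bound then follows, since $\inf H_\lambda\ge\inf\Phi-\kappa/(2\lambda)$.

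\emph{Proximity.} The first-order conditions give $\nabla_yf(x,y^*_\lambda)+\lambda\nabla_y g(x,y^*_\lambda)=0$ and $\nabla_yg(x,y^*)=0$. Strong monotonicity of $\nabla_y g$ then yields $\norm{y^*_\lambda-y^*}\le 1/(\lambda\mu)=\kappa/\lambda$.

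\emph{Gradient error.} By Danskin's theorem,
\[
\nabla H_\lambda=\nabla_xf(x,y^*_\lambda)+\lambda\bigl(\nabla_xg(x,y^*_\lambda)-\nabla_xg(x,y^*)\bigr).
\]
Also $\nabla\Phi=\nabla_xf(x,y^*)-\nabla^2_{xy}g\,(\nabla^2_{yy}g)^{-1}\nabla_yf$, all evaluated at $(x,y^*)$. Set $\delta=y^*_\lambda-y^*$.
\begin{itemize}
\item Taylor expansion with Hessian Lipschitz constant $1$ gives $\lambda(\nabla_xg(y^*_\lambda)-\nabla_xg(y^*))=\lambda\nabla^2_{xy}g\,\delta+O(\lambda\norm\delta^2)$. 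Expanding the optimality condition the same way gives $\lambda\nabla^2_{yy}g\,\delta=-\nabla_yf(x,y^*_\lambda)+O(\lambda\norm\delta^2)$.
\item Solving for $\delta$ and replacing $\nabla_yf(y^*_\lambda)$ by $\nabla_yf(y^*)$ costs $\kappa\norm\delta$. Combining the pieces gives an error of order $\norm\delta+\kappa\lambda\norm\delta^2+\kappa\norm\delta$, which is $\lesssim\kappa^3/\lambda$ (in fact $\kappa^2/\lambda$). This is comfortably within the claimed $C\kappa^3/\lambda$.
\end{itemize}

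\emph{Smoothness (main obstacle).} The bound $\Lip(\nabla H_\lambda)\le C_H\kappa^3$ must hold uniformly in $\lambda$, and the naive estimate $\lambda\cdot\Lip(y^*_\lambda)$ blows up with $\lambda$. My approach is to differentiate $\nabla H_\lambda$ through the implicit function theorem. With $A=\nabla^2_{yy}f+\lambda\nabla^2_{yy}g$, this gives $Dy^*_\lambda=-A^{-1}(\nabla^2_{yx}f+\lambda\nabla^2_{yx}g)$ and $Dy^*=-(\nabla^2_{yy}g)^{-1}\nabla^2_{yx}g$. However, $f_i$ is not assumed twice differentiable, so I would avoid second derivatives of $f$: I would instead bound difference quotients $\nabla H_\lambda(x)-\nabla H_\lambda(x')$ directly. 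Then:
\begin{itemize}
\item Split the difference into $\nabla_xf(x,y^*_\lambda)$ variations, which are $O(\Lip(y^*_\lambda))=O(\kappa)$, and the term $\lambda[\nabla_xg(\cdot,y^*_\lambda)-\nabla_xg(\cdot,y^*)]$.
\item Rewrite the second term as $\lambda\int_0^1\nabla^2_{xy}g(x,y^*+t\delta)\,dt\;\delta$.
\item The key estimate is $\norm{\delta(x)-\delta(x')}\lesssim(\kappa^3/\lambda)\norm{x-x'}$. This comes from subtracting the two optimality systems, using $\lambda\norm\delta\le\kappa$, Hessian Lipschitzness of $g$, and $\Lip(y^*)\le\kappa$.
\item The variation of the integrated Hessian is $O((1+\kappa)\norm{x-x'})$ and multiplies $\lambda\norm\delta\le\kappa$.
\end{itemize}
Collecting the terms gives $O(\kappa^3)$. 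The delicate point is the $\delta$-Lipschitz estimate without second derivatives of $f$. There I would use strong monotonicity of $\nabla_yf+\lambda\nabla_yg$ together with $\nabla_yf$ being $1$-Lipschitz, treating $\nabla_yf$ as a perturbation of size $1$ against curvature $\lambda\mu$.
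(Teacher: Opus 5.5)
Your proposal is correct and follows the paper's proof. The value sandwich and the Taylor-expansion bias argument are the same as the paper's. Your smoothness argument is the paper's Lemma~\ref{lem:fields} route written in the unscaled variable $\delta=v/\lambda$: the paper writes $\nabla H_\lambda(x)=C\bigl(x,z^*(x),v^*(x,z^*(x))\bigr)$ with $v^*=\lambda\delta$. Your key estimate $\Lip(\delta)\lesssim\kappa^3/\lambda$ is exactly the paper's bound $\Lip\bigl(x\mapsto v^*(x,z^*(x))\bigr)\lesssim\kappa^2(1+\kappa)$.

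One correction: the aside ``in fact $\kappa^2/\lambda$'' does not follow, since your remainder $\kappa\lambda\norm\delta^2$ is only $O(\kappa^3/\lambda)$ under $\norm\delta\le\kappa/\lambda$, and this order is attained in general. The lemma only needs $C\kappa^3/\lambda$, so the proof is unaffected.
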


\section{Related Work}\label{sec:related}
\paragraph{Applications.} Bilevel problems arise in hyperparameter optimization \citep{pedregosa,franceschi}, training-example reweighting \citep{ren2018}, and meta-learning \citep{franceschi}. In the validation-based setting of Section~\ref{sec:num-real}, the upper and lower sums are validation and training losses, respectively.

\paragraph{Hypergradient methods.} Implicit hypergradient methods use Hessian- or Jacobian-vector products of the lower objective. Stochastic approximation methods were analyzed by \citet{ghadimi}, \citet{hong2023} and \citet{ji2021}; momentum and recursive variance reduction improve their sample complexity \citep{khanduri2021,yang2021}. These methods track both the lower solution and a linear-system solution.

\paragraph{Fully first-order methods.} Fully first-order methods replace second-order products by differences of gradients. \citet{liu2022} proposed a value-function method, and \citet{kwon} developed a penalty-based stochastic approximation method. \citet{kwon2024} studied the penalty landscape, and \citet{shen2023} analyzed penalty gradient descent without lower-level strong convexity. Under normalized smoothness assumptions, \citet{cmz} obtained a deterministic gradient complexity of $\Ot(\kappa^4\varepsilon^{-2})$. Accelerated inner solves improve this bound to $\Ot(\kappa^{7/2}\varepsilon^{-2})$ \citep[Appendix F]{cjz}. Full-pass implementations give the first two rows of Table~\ref{tab:complexity}.

\begin{table}[H]
\centering
\caption{Component-gradient complexity for achieving \eqref{eq:criterion}. All expressions display the dependence on $N$, $\kappa$, and $\varepsilon$; logarithmic factors are suppressed. The first two rows implement deterministic methods by full passes. The last two rows assume componentwise lower strong convexity. Every row uses component gradients only.}\label{tab:complexity}
\small
\renewcommand{\arraystretch}{1.45}
\begin{tabular}{@{}>{\raggedright\arraybackslash}p{0.44\linewidth}@{\hspace{0.06\linewidth}}>{\centering\arraybackslash}p{0.50\linewidth}@{}}
\toprule
Method & Component-gradient complexity\\
\midrule
Penalty method of \citet{cmz} & $\Ot\!\left(N\kappa^4\varepsilon^{-2}\right)$\\
Accelerated penalty method \citep{cjz} & $\Ot\!\left(N\kappa^{7/2}\varepsilon^{-2}\right)$\\
\textbf{Full refresh with accelerated finite-sum solves (Proposition~\ref{prop:baseline})} &
{\bfseries\boldmath $\Ot\!\left(N+\bigl(N\kappa^3+\sqrt N\kappa^{7/2}\bigr)\varepsilon^{-2}\right)$}\\
\textbf{SICBO (Theorem~\ref{thm:main})} &
{\bfseries\boldmath $\Ot\!\left(N+\sqrt N\kappa^{7/2}\varepsilon^{-2}\right)$}\\
\bottomrule
\end{tabular}
\end{table}

\paragraph{Finite-sum bilevel methods.} SVRG \citep{svrg}, SARAH \citep{sarah}, SPIDER \citep{spider} and PAGE \citep{page} reduce full-gradient evaluations in finite-sum optimization. SABA \citep{saba} and SRBA \citep{srba} apply variance reduction to the implicit hypergradient. SPABA \citep{spaba} achieves $O(N+\sqrt N\varepsilon^{-2})$ under \eqref{eq:criterion}, including initialization and suppressing other problem constants, with gradient and Hessian/Jacobian-vector oracles. Its $\kappa$ dependence is not explicit. Our bound uses only component gradients and gives explicit $\kappa^{7/2}$ dependence, under the stronger assumptions of componentwise lower strong convexity and globally bounded average $\nabla_y f$.

\paragraph{Accelerated finite-sum minimization.} Katyusha \citep{katyusha} solves a smooth, strongly convex finite sum with $M$ components and condition number $O(\kappa)$ in $\Ot(M+\sqrt{M\kappa})$ gradient calls. This result immediately yields the following benchmark.
\begin{proposition}[Full-refresh benchmark]\label{prop:baseline}
Solving both inner problems and computing a full penalty-gradient estimate at every outer iteration gives the component-gradient complexity
\begin{equation}\label{eq:baseline}
 \Ot\!\left(N+\bigl(N\kappa^3+\sqrt N\kappa^{7/2}\bigr)
                    \varepsilon^{-2}\right).
\end{equation}
For $N\le\kappa$, this is bounded by \eqref{eq:headline}.
\end{proposition}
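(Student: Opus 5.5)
The plan is to analyze gradient descent on the penalty surrogate $H_\lambda$ from Lemma~\ref{lem:penalty}, with every outer gradient formed from accurate inner solutions, and then to add up the cost. We fix $\lambda=\Theta(\kappa^3/\varepsilon)$, chosen so that $C\kappa^3/\lambda\le\varepsilon/2$. Note that $\lambda\ge2\kappa$ holds because $\varepsilon\le1$ and $\kappa\ge1$. By Danskin's theorem we have
\[
\nabla H_\lambda(x)=\nabla_x f(x,y_\lambda^*(x))+\lambda\bigl(\nabla_x g(x,y_\lambda^*(x))-\nabla_x g(x,z^*(x))\bigr),
\]
where $y_\lambda^*(x)$ minimizes $f+\lambda g$ and $z^*(x)$ minimizes $g$. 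The outer step is $x_{k+1}=x_k-\eta\,\widehat\nabla_k$ with $\eta=1/(C_H\kappa^3)$. The standard inexact-gradient descent lemma then gives
\[
\frac1K\sum_k\norm{\nabla H_\lambda(x_k)}^2\lesssim \frac{\kappa^3\bar\Delta_H}{K}+\frac1K\sum_k\norm{\widehat\nabla_k-\nabla H_\lambda(x_k)}^2 .
\]
We take $K=\Theta(\kappa^3\varepsilon^{-2})$ and require per-iteration gradient error at most $\varepsilon/4$. Combining this with the $\varepsilon/2$ penalty bias yields \eqref{eq:criterion} for a uniformly sampled iterate.

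To control the per-iteration error, note that $\nabla_x g$ is $1$-Lipschitz. The error is therefore at most $(1+\lambda)\norm{y_k-y_\lambda^*(x_k)}+\lambda\norm{z_k-z^*(x_k)}$, so it suffices to solve both inner problems to accuracy $\delta=\Theta(\varepsilon/\lambda)=\Theta(\varepsilon^2/\kappa^3)$. Both inner problems are finite sums with componentwise strong convexity by Assumption~\ref{ass:sc}.
\begin{itemize}
\item The problem for $z$ is the average of the $m$ components $g_j$, each $\mu$-strongly convex and $1$-smooth.
\item The problem for $y$ can be written as an average of $N$ components, namely $\tfrac{N}{n}f_i$ and $\tfrac{N}{m}\lambda g_j$, rescaled by $1/\lambda$. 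After rescaling, each component has smoothness $O(N/\min(n,m))$ relative to strong convexity $\Theta(\mu)$.
\end{itemize}
To avoid the imbalance in the second item, the cleaner route is to use Katyusha's proximal/non-uniform-sampling variant. Its cost depends on the average smoothness, which is $O(1)$ after dividing by $\lambda$. For strong convexity, $f$ is $1$-smooth and $\lambda\mu\ge2$, so $f+\lambda g$ is $(\lambda\mu/2)$-strongly convex; after rescaling, the condition number is $O(\kappa)$.

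With these facts, Katyusha reaches accuracy $\delta$ in $\Ot(N+\sqrt{N\kappa})$ calls, with the logarithm absorbing $\log(1/\delta)$ and the initial distance. The initial distance is kept polynomially bounded by warm starting from the previous iterate. Since $y_\lambda^*$ and $z^*$ are $O(\kappa)$-Lipschitz in $x$ and the steps are small, this distance stays polynomial. If one wants high-probability or in-expectation accuracy, the logarithmic factor also covers it. Forming the full estimate $\widehat\nabla_k$ costs another $O(N)$ calls. Each outer iteration therefore costs $\Ot(N+\sqrt{N\kappa})$, and the total is
\[
\Ot\bigl(N+K(N+\sqrt{N\kappa})\bigr)=\Ot\bigl(N+(N\kappa^3+\sqrt N\kappa^{7/2})\varepsilon^{-2}\bigr),
\]
which is \eqref{eq:baseline}.

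For the final claim, suppose $N\le\kappa$. Then $N\kappa^3=\sqrt N\sqrt N\kappa^3\le\sqrt N\kappa^{7/2}$, so \eqref{eq:baseline} is bounded by \eqref{eq:headline}.

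The main obstacle is establishing that the $y$-subproblem has condition number $O(\kappa)$ in the component-averaged sense that Katyusha needs, so that the cost is $\sqrt{N\kappa}$ rather than $\sqrt{N\lambda}$ or $\sqrt{N^2\kappa/\min(n,m)}$. This hinges on the scaling by $\lambda$ and on using importance or non-uniform sampling over the two sums. A secondary technical step is bounding the warm-start distances so that the logarithms stay $\Ot(1)$.
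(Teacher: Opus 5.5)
\paragraph{Verdict.} Your overall structure matches the paper's full-refresh argument (Algorithm~\ref{alg:main} with $q=1$), but the Katyusha step for the $y$-subproblem, which you treat as routine, has a gap. The shared structure is: inexact gradient descent on $H_\lambda$ for $O(\kappa^3\varepsilon^{-2})$ steps, per-step gradient error a small multiple of $\varepsilon$, and $\Ot(N+\sqrt{N\kappa})$ cost per step. Your crude bound $(1+\lambda)\norm{y_k-y_\lambda^*(x_k)}+\lambda\norm{z_k-z^*(x_k)}$ in place of the paper's $\kappa^3\tau$ field estimate is harmless, because inner accuracy enters only logarithmically.

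\paragraph{The gap: component convexity.} Assumption~\ref{ass:smooth} gives the $f_i$ Lipschitz gradients but not convexity. Your components $\frac{N}{n\lambda}f_i$ are therefore only weakly convex, with modulus $\frac{N}{n\lambda}$, which need not be small relative to $\mu$ when $n\ll N$. Katyusha's $\Ot(M+\sqrt{M\kappa})$ guarantee, with uniform or importance sampling, is proved for convex components. Strong convexity of the sum $f+\lambda g$ alone does not suffice: it places you in the sum-of-nonconvex setting. There the available accelerated guarantees (e.g.\ Katyusha X) are of order $M+M^{3/4}\sqrt\kappa$, which would give $N^{3/4}\kappa^{7/2}\varepsilon^{-2}$ rather than $\sqrt N\kappa^{7/2}\varepsilon^{-2}$. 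Your importance sampling does handle the $n$-versus-$m$ balancing, which you flag as the main obstacle. What is missing is convexity of the individual components.

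\paragraph{The fix.} Transfer a quadratic between the two sums; this is exactly the paper's $P_i,Q_j$ construction in Appendix~\ref{app:cost}. For $\lambda\ge8\kappa$:
\begin{itemize}
\item $\frac1\lambda f_i+\frac\mu8\norm{y}^2$ is $(\mu/8)$-strongly convex and $O(1)$-smooth.
\item $g_j-\frac\mu8\norm y^2$ is $(3\mu/4)$-strongly convex and $O(1)$-smooth.
\item The quadratics cancel in the average. In your weighted decomposition, add $\frac Nn\cdot\frac\mu8\norm y^2$ to each $f$-component and subtract $\frac Nm\cdot\frac\mu8\norm y^2$ from each $g$-component.
\end{itemize}
After this, your importance-sampling route is a valid alternative to the paper's duplication trick, which uses $\lceil N/n\rceil$ and $\lceil N/m\rceil$ copies with weights in $[1,3]$.

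\paragraph{A lesser point: step sizes and warm starts.} Your claim that ``the steps are small'' does not follow pointwise from the assumptions, since nothing bounds $\nabla_x f$ and hence $\norm{\widehat\nabla_k}$. The paper gets bounded trajectories from clipping and the safeguard tests of Algorithm~\ref{alg:main}. These also cap the failure event's contribution at $\delta B_T^2$. Without clipping, you would need an in-expectation recursion based on $\norm{\nabla H_\lambda(x)}^2\le2L_H(H_\lambda(x)-\inf H_\lambda)$ to keep the warm-start distances, and hence the fixed solver budgets, polynomially bounded.
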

The proof appears in Appendix~\ref{app:cost}. For $N>\kappa$, SICBO improves the iterative cost $\Ot(N\kappa^3\varepsilon^{-2})$ by a factor $\widetilde\Theta(\sqrt{N/\kappa})$. The total-cost improvement can be smaller when initialization dominates.

\paragraph{Lower bounds.} \citet{jiliang2023} established lower bounds for bilevel problems with strongly convex or convex upper objectives. For our nonconvex finite-sum class, the following reduction gives the optimal dependence on $N$ and $\varepsilon$.
\begin{proposition}[Optimal dependence for fixed condition number]\label{prop:lower}
For every fixed $\kappa\ge1$, sufficiently large $N$, and sufficiently small $\varepsilon>0$, the worst-case component-gradient complexity of \eqref{eq:criterion} under Assumptions~\ref{ass:smooth}--\ref{ass:gap}, with $\Delta=1$, is at least $\Omega(N+\sqrt N\varepsilon^{-2})$.
\end{proposition}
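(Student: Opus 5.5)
We reduce the bilevel class to the single-level nonconvex finite-sum class and then invoke the known lower bound for that class. Take $g_j(x,y)=\tfrac{\mu}{2}\norm y^2$ for every $j$ (with $\mu=1/\kappa$ fixed). Then $y^*(x)\equiv0$, each $g_j$ is $\mu$-strongly convex, its gradient is $\mu$-Lipschitz (hence $1$-Lipschitz since $\mu\le1$), and its Hessian is constant, so Assumptions~\ref{ass:sc} and the $g$-part of Assumption~\ref{ass:smooth} hold. Take $f_i(x,y)=h_i(x)$, independent of $y$, so that $\nabla_y f\equiv0$ and Assumption~\ref{ass:gap} reduces to $\Phi(x)=h(x):=n^{-1}\sum_i h_i(x)$ with $h(x_0)-\inf h\le1$. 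Calls to $\nabla g_j$ reveal nothing about $h$: each returns the known vector $(0,\mu y)$, which an algorithm could compute itself. Hence any algorithm for the bilevel class yields an algorithm for finding $x$ with $\E\norm{\nabla h(x)}^2\le\varepsilon^2$ using no more $\nabla h_i$ calls, where $h=n^{-1}\sum_i h_i$ and each $h_i$ has a $1$-Lipschitz gradient. We choose $m\le n$, say $m=n$, so that $N=2n=\Theta(n)$.

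Next we invoke the finite-sum nonconvex lower bound of \citet{spider} (Fang et al.), also established in the PAGE paper \citep{page}. For $n\le O(\varepsilon^{-4})$ it gives $\Omega(\sqrt n\,L\Delta\varepsilon^{-2})$ component-gradient calls for any randomized algorithm attaining $\E\norm{\nabla h}\le\varepsilon$, and our criterion \eqref{eq:criterion} implies that guarantee by Jensen. The construction uses average smoothness; individual smoothness with constant $1$ is obtained by scaling, since the hard instances are built from $n$ disjoint-coordinate copies of the Carmon et al.\ chain function, each with component-Lipschitz gradient. We need to check that the normalization $L=1$, $\Delta=1$ is attainable, by rescaling the chain function with $\varepsilon$ small. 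This is the main obstacle: the hard instance must be stated with per-component $1$-Lipschitz gradients rather than mean-squared smoothness. If we cannot cite it cleanly in this form, we will build it directly: let $h_i(x)=\frac{1}{n}\cdot n\,\lambda^2\bar F(x^{(i)}/\lambda)$ on disjoint blocks with appropriate scaling. Each component is then $O(1)$-smooth after dividing by a constant, the total gap is $O(1)$, and a gradient norm of at most $\varepsilon$ forces progress on a constant fraction of the blocks, each requiring $\Omega(\varepsilon^{-2}/\sqrt n)$-length chains discovered one coordinate per query of that component. The requirement "sufficiently small $\varepsilon$" absorbs the regime condition $n\lesssim\varepsilon^{-4}$, and for larger $n$ at fixed $\varepsilon$ the $N$ term dominates.

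Finally, we establish the $\Omega(N)$ term separately. Place the only nontrivial function in one unknown component: $h_{i^*}(x)=n\cdot c\,\psi(x)$ is not allowed, since it would be $n$-smooth, so instead set $h_{i}(x)=c\,\langle v,x\rangle$-type bumps with bounded gradient, where each $h_i$ independently is either $0$ or a $1$-smooth function with $\norm{\nabla h}\ge 2\varepsilon$ at $x_0$ arising from a random constant fraction. A standard argument then shows that with $o(n)$ queries the algorithm cannot determine the averaged gradient direction to accuracy $\varepsilon$. Since $\varepsilon$ is small, a simpler approach is to use a random sign pattern so that $\nabla h$ has size $\Theta(\varepsilon)$ in a direction determined by $\Omega(n)$ unqueried components. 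Combining the two bounds via $\max\ge$ half the sum gives $\Omega(N+\sqrt N\varepsilon^{-2})$.
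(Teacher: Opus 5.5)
\textbf{There is a genuine gap: your single-level lower bound does not hold under individual smoothness.}

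Your reduction matches the paper's. You use lower components with known gradients and $f_i(x,y)=h_i(x)$, so that $\Phi=h$ and lower queries cost nothing. The paper takes $m=1$, $n=N-1$ and $g_1=\frac12(y_1^2+\kappa^{-1}y_2^2)$; your $g_j=\frac\mu2\norm y^2$ with $m=n$ works equally well.

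The gap is in the single-level ingredient. You need $\Omega(\sqrt n\,\varepsilon^{-2})$ for finite sums whose \emph{individual} components have $1$-Lipschitz gradients. The bounds of Fang et al.\ and of PAGE are proved under mean-squared (average) smoothness. Their $\sqrt n$ comes precisely from letting each disjoint-block component be $\sqrt n\,L$-smooth: with disjoint blocks, an individual constant $\ell$ yields only an average constant $\ell/\sqrt n$. Rescaling to individual constant $1$ therefore removes the $\sqrt n$, for every $\varepsilon$.

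Your fallback construction shows the same loss. With $h_i(x)=\lambda^2\bar F(x^{(i)}/\lambda)$, each unfinished block contributes a gradient of norm only $\Theta(\lambda/n)$ to $\nabla h$. Forcing $\norm{\nabla h}>\varepsilon$ while a constant fraction of blocks is unfinished therefore requires $\lambda\gtrsim\sqrt n\,\varepsilon$. The unit gap forces $\lambda^2T\lesssim1$, so chains have length $T\lesssim1/(n\varepsilon^2)$, not $\varepsilon^{-2}/\sqrt n$. The total work $nT$ is then only $O(\varepsilon^{-2})$, so this route proves $\Omega(N+\varepsilon^{-2})$, not the claimed bound.

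What is missing is a hard instance that is not block-separable. The paper imports Theorem~3.1 of Peng, which gives $\Omega(n+\sqrt n\,\eta^{-2})$ under individual smoothness for unrestricted randomized algorithms. As sketched in Appendix~\ref{app:unrestricted-joint}, that construction is a dense weak hiding chain. Every component touches every block through sign vectors $s_{it}$ with $n^{-1}\sum_i s_{it}=\rho u_t$ and $\rho\asymp n^{-1/2}$, so each component carries only a $\rho$-fraction of the signal at smoothness $O(1/\rho)$ before rescaling. The paper then turns \eqref{eq:criterion} into a constant-probability failure at threshold $\eta=a\varepsilon$ via $\Pr\{\norm{\nabla h}>a\varepsilon\}\le a^{-2}$, and handles random stopping with an expected-budget cap. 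Your proposal does not address either of these two steps.

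Your $\Omega(N)$ paragraph is also only a sketch, but it is easy to repair. A hidden linear needle $-n\tau\sigma\mathbf 1_{\{i=i^*\}}x$ adds no curvature, so your concern about $n$-smoothness does not arise. In any case, Peng's bound already contains the $n$ term.
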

Appendix~\ref{app:lower} proves the proposition by reduction from the individual-smoothness lower bound of \citet{peng}, which permits unrestricted randomized component queries. Thus \eqref{eq:headline} is optimal up to logarithms in $N$ and $\varepsilon$ for fixed $\kappa$. \citet[Theorem~2]{srba} previously proved an $\Omega(\sqrt n\varepsilon^{-2})$ bound for a restricted class of linear bilevel algorithms, where $n$ is our upper-level sum size. Their regime is $\varepsilon^2\lesssim1/n$ with other constants fixed, and their result does not characterize $\kappa$ dependence.

For condition-number dependence, \citet[Theorem~4.1]{cjz} prove an $\Omega(\kappa^{5/2}\varepsilon^{-2})$ bound for deterministic first-order algorithms with quadratic lower objectives. Their class does not impose our global bound on $\nabla_y f$, so the result does not directly apply here. Appendix~\ref{app:lower} gives further lower bounds under our assumptions. The sharp joint dependence on $N$ and $\kappa$ remains open; separate lower bounds do not establish a product bound.

\section{The SICBO Method}\label{sec:method}
\subsection{Scaled gradient differences}
Let $z$ approximate $y^*(x)$ and introduce $v=\lambda(y-z)$. For $w=(x,z,v)$, define
\begin{equation}\label{eq:fields}
\begin{aligned}
 A_j(x,z)&=\nabla_y g_j(x,z),\\
 B_{ij}(w)&=\nabla_y f_i(x,z+v/\lambda)
       +\lambda\bigl[\nabla_y g_j(x,z+v/\lambda)-\nabla_y g_j(x,z)\bigr],\\
 C_{ij}(w)&=\nabla_x f_i(x,z+v/\lambda)
       +\lambda\bigl[\nabla_x g_j(x,z+v/\lambda)-\nabla_x g_j(x,z)\bigr].
\end{aligned}
\end{equation}
Write $A,B,C$ for the population means and $A_S,B_S,C_S$ for sample averages, with $A(w)=A(x,z)$. Each batch samples independent uniform pairs $(i,j)$ with replacement, and $A_S$ ignores $i$. Each lower-gradient difference uses the same index $j$. Computing full means costs $O(N)$ calls.

Let $z^*(x)=y^*(x)$ and let $v^*(x,z)$ be the unique root of $B(x,z,\cdot)$. At these roots,
\begin{equation}\label{eq:exact-field}
 C\bigl(x,z^*(x),v^*(x,z^*(x))\bigr)=\nabla H_\lambda(x).
\end{equation}
The scaling makes the inner condition number independent of $\lambda$: each $B_{ij}(x,z,\cdot)$ is the gradient of a strongly convex potential with condition number $O(\kappa)$. The following lemma gives the field regularity.
\begin{lemma}[Field regularity]\label{lem:fields}
For $\lambda\ge2\kappa$, $B_{ij}(x,z,\cdot)$ is $(\mu/2)$-strongly monotone and $2$-Lipschitz. Moreover,
\begin{equation}\label{eq:root-lipschitz}
 \norm{v^*(x,z)}\le\kappa,\qquad
 \Lip(z^*)\le\kappa,\qquad \Lip_z(v^*)\lesssim\kappa^2.
\end{equation}
On $\norm v\le2\kappa$, each $B_{ij}$ and $C_{ij}$ is $O(\kappa)$-Lipschitz in $(x,z)$ and $O(1)$-Lipschitz in $v$. Consequently,
$z\mapsto C(x,z,v^*(x,z))$ is $O(\kappa^2)$-Lipschitz.
\end{lemma}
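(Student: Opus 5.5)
The plan is to read every claim off the integral form of the gradient differences. With $u=v/\lambda$ and
$\bar H_j(x,z,u)=\int_0^1\nabla_{yy}^2 g_j(x,z+tu)\,dt$, write
\[
B_{ij}(w)=\nabla_y f_i(x,z+v/\lambda)+\bar H_j(x,z,v/\lambda)\,v .
\]
By Assumptions~\ref{ass:smooth}--\ref{ass:sc}, $\mu I\preceq\bar H_j\preceq I$. The map $v\mapsto B_{ij}(x,z,v)$ is the gradient of the potential
\[
\psi_{ij}(v)=\lambda f_i(x,z+v/\lambda)+\lambda^2 g_j(x,z+v/\lambda)-\lambda\,\ip{\nabla_y g_j(x,z)}{v}.
\]
Its Hessian is $\lambda^{-1}\nabla^2_{yy}f_i+\nabla^2_{yy}g_j$ where second derivatives exist; in general we use the Lipschitz bound on $\nabla f_i$ instead. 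Since $\nabla_y f_i$ is $1$-Lipschitz, the first term is $1/\lambda\le\mu/2$-Lipschitz in $v$. Hence $B_{ij}(x,z,\cdot)$ is $(\mu-\mu/2)$-strongly monotone and $(1+\mu/2)\le2$-Lipschitz. This proves the first sentence, and the same bounds pass to the population mean $B$.

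\textbf{Bounds on $v^*$, $z^*$, $\Lip_z(v^*)$.} At the root, $0=B(x,z,v^*)$. Strong monotonicity against $v=0$ gives
\[
(\mu/2)\norm{v^*}^2\le\ip{B(x,z,v^*)-B(x,z,0)}{v^*}=-\ip{\nabla_y f(x,z)}{v^*}.
\]
Since $\norm{\nabla_y f}\le1$ for the average (Assumption~\ref{ass:gap}), we get $\norm{v^*}\le2\kappa$. To get the sharper constant $\kappa$, I would use the $(\mu-1/\lambda)$ form together with the average Hessian $\bar H=m^{-1}\sum_j\bar H_j\succeq\mu I$. Writing $\bar H v^*=-\nabla_y f(x,z+v^*/\lambda)$ gives $\norm{v^*}\le\kappa$ directly, with no monotonicity loss.

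The bound $\Lip(z^*)\le\kappa$ is the standard implicit-function estimate $\nabla z^*=-(\nabla_{yy}^2g)^{-1}\nabla^2_{yx}g$, which has norm at most $1/\mu$.

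For $\Lip_z(v^*)$, compare roots at $z,z'$. We have
\[
\tfrac{\mu}{2}\norm{v^*(z)-v^*(z')}\le\norm{B(x,z,v^*(z'))-B(x,z',v^*(z'))}.
\]
So it suffices to show $B$ is $O(\kappa)$-Lipschitz in $z$ on $\norm v\le2\kappa$, which gives $O(\kappa^2)$.

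\textbf{Lipschitz bounds in $(x,z)$ and $v$.} On $\norm v\le2\kappa$, differentiating $\bar H_j(x,z,u)v$ in $(x,z)$ uses the Hessian Lipschitz constant $1$. The result is at most $\norm v\le2\kappa$, since $\int_0^1$ of a $1$-Lipschitz Hessian difference times $\norm v$ is bounded by $\norm v$. The $f_i$ term contributes $O(1)$. The same expression with the cross Hessian $\int\nabla^2_{xy}g_j$ handles $C_{ij}$. For Lipschitzness in $v$, the $g$-part of $C_{ij}$ has $v$-derivative $\nabla^2_{xy}g_j(x,z+v/\lambda)$, of norm at most $1$, and the $f$-part contributes $1/\lambda$. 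So both fields are $O(1)$-Lipschitz in $v$.

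\textbf{Composition.} For $z,z'$ we have $\norm{v^*}\le\kappa\le2\kappa$, so the bounds above apply. Then
\[
\norm{C(x,z,v^*(z))-C(x,z',v^*(z'))}\le O(\kappa)\norm{z-z'}+O(1)\,O(\kappa^2)\norm{z-z'}=O(\kappa^2)\norm{z-z'}.
\]

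\textbf{Main obstacle.} The delicate step is the $O(\kappa)$ Lipschitz bound in $(x,z)$. A naive bound on the $\lambda[\nabla g_j(\cdot+v/\lambda)-\nabla g_j(\cdot)]$ difference gives a factor $\lambda$. The point is to use the integral form so that $\lambda$ cancels against $v/\lambda$, and then to invoke Hessian Lipschitzness of $g_j$ to replace $\lambda$ by $\norm v\lesssim\kappa$. This is precisely why the statement restricts to the ball $\norm v\le2\kappa$.
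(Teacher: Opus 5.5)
Your proposal is correct and follows the paper's proof essentially step for step. You use the same potential for $B_{ij}(x,z,\cdot)$, and you get the root bound by pairing the root equation with $v^*$; your averaged matrix $\bar H$ is just the paper's inner product with $y-z$ rewritten. You then use the integral form of the scaled gradient differences, with Hessian Lipschitzness giving the $O(1+\norm v)$ context constant, and strong monotonicity to pass this to $v^*$ and then to the composite map. The one cosmetic difference is that you derive $\Lip(z^*)\le\kappa$ from the implicit function theorem, whereas the paper compares the optimality equations at $x$ and $x'$ directly; both give the same constant.
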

The proof appears in Appendix~\ref{app:fields}. The fields in \eqref{eq:fields} require only component gradients.

\subsection{Sampled implicit corrections}
SICBO maintains $z_t,v_t$ and the field estimates $\widehat A_t,\widehat B_t,\widehat G_t$. Write $w_t=(x_t,z_t,v_t)$. Each epoch begins with population inner updates and full field evaluations. Between refreshes, SICBO updates $x$, then $z$, then $v$, using the residual maps
\begin{equation}\label{eq:correction-residuals}
\begin{aligned}
 r_t^z(u)&:=A_{S_t^z}(x_{t+1},u)-A_{S_t^z}(x_t,z_t)+\widehat A_t,\\
 r_t^v(u)&:=B_{S_t^v}(x_{t+1},z_{t+1},u)-B_{S_t^v}(w_t)+\widehat B_t.
\end{aligned}
\end{equation}
We use capped Katyusha iterations to obtain $\norm{r_t^z(z_{t+1})}\le\tau$ and then, with $z_{t+1}$ fixed, $\norm{r_t^v(v_{t+1})}\le\tau$. Each residual is the gradient of a strongly convex objective. Every batch contains $b$ fresh, independent uniform index pairs. The evaluation batch $R_t$ is drawn after both corrections, making its increments conditionally centered at the updated points.

\begin{algorithm}[t]
\caption{SICBO: Sampled Implicit Corrections for Bilevel Optimization}\label{alg:main}
\begin{algorithmic}[1]
\Require $x_0,z_{\rm init},v_{\rm init};\quad T,q,b,\lambda,\alpha,\Gamma,\tau$.
\For{$t=0,\ldots,T-1$}
 \If{$t\equiv0\pmod q$}
  \State Run Katyusha in $z_t$ until $\norm{A(x_t,z_t)}\le\tau$.
  \State Run Katyusha in $v_t$ until $\norm{B(x_t,z_t,v_t)}\le\tau$.
  \State $(\widehat A_t,\widehat B_t,\widehat G_t)\gets\bigl(A(x_t,z_t),B(w_t),C(w_t)\bigr)$.
 \EndIf
 \State $g_t\gets\widehat G_t/\max\{1,\norm{\widehat G_t}/\Gamma\}$.
 \State $x_{t+1}\gets x_t-\alpha g_t$.
 \If{$t+1<T$ and $t+1\not\equiv0\pmod q$}
  \State Draw $S_t^z$; run Katyusha in $z_{t+1}$ until $\norm{r_t^z(z_{t+1})}\le\tau$.
  \State Draw $S_t^v$; run Katyusha in $v_{t+1}$ until $\norm{r_t^v(v_{t+1})}\le\tau$.
  \State Draw $R_t$; set $(\widehat A,\widehat B,\widehat G)_{t+1}\gets(\widehat A,\widehat B,\widehat G)_t+(A,B,C)_{R_t}(w_{t+1})-(A,B,C)_{R_t}(w_t)$.
 \EndIf
\EndFor
\State $\mathcal U\gets\{t<T:\norm{\widehat G_t}\le\Gamma\}$.
\State \Return $x_\xi$ with $\xi\sim\operatorname{Unif}(\mathcal U)$, or $x_0$ if $\mathcal U=\varnothing$.
\end{algorithmic}
\end{algorithm}

The first inner solves start at $z_{\rm init},v_{\rm init}$ and later solves at the preceding iterates. We cap every solve and check its residual. Sampled updates must also satisfy $\norm{z_{t+1}-z_t}\le1/(512\kappa)$ and $\norm{v_{t+1}}\le2\kappa$. A failed test returns $x_0$. The output set $\mathcal U$ uses the estimates that generated the outer steps.

Clipping and the inner checks bound all accepted trajectories, allowing the high-probability tracking estimates to imply \eqref{eq:criterion}. A failed check terminates the run without an additional full pass. Appendix~\ref{app:cost} gives the solver budgets and their cost. For $N\le\kappa$, we set $q=1$ and recover the full-refresh benchmark.

\section{Complexity Analysis}\label{sec:analysis}
\subsection{Parameters and main result}
For $N>\kappa$, choose
\begin{equation}\label{eq:parameters}
\begin{alignedat}{3}
 q&=\left\lceil\sqrt{N/\kappa}\right\rceil,\qquad&
 b&=\left\lceil C_b\kappa q\Lambda^2\right\rceil,\qquad&
 T&=\left\lceil C_T\kappa^3\varepsilon^{-2}\right\rceil,\\
 \lambda&=C_\lambda\kappa^3/\varepsilon,&
 \alpha&=c_\alpha\kappa^{-3},&
 \tau&=c_\tau\varepsilon\kappa^{-4},\\
 \theta&=c_\theta\kappa^{-2},&
 \Gamma&=\theta/\alpha,&
 \nu&=c_\nu\varepsilon\kappa^{-3}.
\end{alignedat}
\end{equation}
Here $\nu$ is used only in the analysis. To specify the confidence level, put $L_H=C_H\kappa^3$, $d=d_x+d_y+2$, and
\begin{equation}\label{eq:confidence}
 B_T^2=4L_H\bar\Delta_H+2L_H^2T^2\theta^2,\quad
 \delta=\min\left\{\frac1{10},\frac{\varepsilon^2}{64(1+B_T^2)}\right\},\quad
 \Lambda=\log\frac{e d N\kappa(T+1)}{\varepsilon\delta}.
\end{equation}
All constants are absolute. We choose $c_\theta$ small and $C_\lambda$ large, then $c_\alpha\le\min\{c_\theta/10,1/(4C_H)\}$, then $c_\tau,c_\nu$ small and $C_b$ large, and finally $C_T$ large. In particular, $\Gamma\ge10\kappa$ and every outer displacement is at most $\theta$. Each inner solve receives failure probability at most $\delta/(C(T+1))$ for a sufficiently large absolute $C$.

\begin{theorem}[Component-gradient complexity]\label{thm:main}
Under Assumptions~\ref{ass:smooth}--\ref{ass:gap}, SICBO (Algorithm~\ref{alg:main}), with \eqref{eq:parameters}--\eqref{eq:confidence} for $N>\kappa$ and $q=1$ for $N\le\kappa$, returns a point satisfying
\begin{equation}\label{eq:main-guarantee}
 \E\norm{\nabla\Phi(x_{\rm out})}^2\le\varepsilon^2
\end{equation}
within
\begin{equation}\label{eq:main-complexity}
 \Ot\!\left(N+\sqrt N\kappa^{7/2}\varepsilon^{-2}\right)
\end{equation}
component-gradient calls. For $N\le\kappa$, the remaining parameters are unchanged and the unused batch size can be omitted. The hidden logarithms include dimension, solver confidence, and the initial inner-distance bound. No Hessian or Hessian-vector oracle is used.
\end{theorem}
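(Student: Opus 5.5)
The proof splits into two parts. First, an accuracy argument: on a high-probability good event, the clipped outer iteration is an inexact gradient method on $H_\lambda$ with errors of order $\varepsilon$. Second, a cost count. For $N\le\kappa$ (so $q=1$) the statement reduces to Proposition~\ref{prop:baseline}, so the work is in the case $N>\kappa$.

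\textbf{Outer descent.} I will use Lemma~\ref{lem:penalty}: $\nabla H_\lambda$ is $L_H$-Lipschitz with $L_H=C_H\kappa^3$, and $\norm{\nabla H_\lambda-\nabla\Phi}\le C\kappa^3/\lambda\le\varepsilon/4$ once $C_\lambda$ is large. Write $e_t=\widehat G_t-\nabla H_\lambda(x_t)$. The standard descent inequality for $x_{t+1}=x_t-\alpha g_t$ with $\alpha\le1/(4L_H)$ then gives, on unclipped steps,
\[
\tfrac{\alpha}{4}\norm{\nabla H_\lambda(x_t)}^2\le H_\lambda(x_t)-H_\lambda(x_{t+1})+\alpha\norm{e_t}^2 .
\]
I will handle clipped steps through the output rule: a step with $\norm{\widehat G_t}>\Gamma$ still decreases $H_\lambda$ whenever $\norm{e_t}\le\Gamma/2$, and such steps are excluded from $\mathcal U$. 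Summing the inequality and using $\bar\Delta_H=O(1)$ and $T\alpha\gtrsim\varepsilon^{-2}$ should give, on the good event $\mathcal E$ where $\norm{e_t}\le\varepsilon/8$ for all $t$, an average of $\norm{\nabla\Phi(x_t)}^2$ over $\mathcal U$ of at most $\varepsilon^2/2$. Off $\mathcal E$, accepted trajectories move at most $\theta$ per step, so $\norm{\nabla H_\lambda(x_{\rm out})}^2\le B_T^2$. The choice of $\delta$ in \eqref{eq:confidence} then makes the contribution of $\mathcal E^c$ to the expectation at most $\varepsilon^2/2$. The same bound covers the returns of $x_0$ after a failed check.

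\textbf{Tracking error (main obstacle).} Next I decompose $e_t$ as a sum of three terms. The first is $\widehat G_t-C(w_t)$, the accumulated martingale error from the batches $R_s$ since the last refresh. The second is the deviation of $C(w_t)$ from $C(x_t,z_t,v^*(x_t,z_t))$, controlled by the residual $\norm{B(w_t)}$ through strong monotonicity in Lemma~\ref{lem:fields}. The third is the deviation of $C(x_t,z_t,v^*(x_t,z_t))$ from $\nabla H_\lambda(x_t)$, controlled by $\kappa^2\norm{z_t-z^*(x_t)}$ through the $O(\kappa^2)$-Lipschitz property and \eqref{eq:exact-field}.

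For $z$, the correction enforces $A_{S}(x_{t+1},z_{t+1})-A_S(w_t)+\widehat A_t\approx0$. Hence $A(x_t,z_t)$ deviates from $0$ only by $\tau$ per step plus sampling noise of the increments. Those increments have size $O(\kappa\theta+\norm{z_{t+1}-z_t})$ because of the displacement caps. The difficulty is that $z_{t+1}$ depends on $S_t^z$, so these increments are not conditionally centered.

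My first attempt is to compare each correction with the population correction that uses the same endpoints. I will bound the difference by matrix Bernstein applied to the sampled Jacobians, measured in the curvature norm $\norm\cdot_{H}$ with $H=\nabla^2_{yy}g$; this is where the Hessian-Lipschitz part of Assumption~\ref{ass:smooth} enters. That yields a per-step error of order $\sqrt{\Lambda/b}$ times the step size, amplified by only $\sqrt\kappa$ rather than $\kappa$ when it is converted back through the strongly monotone map. The $v$-update is treated the same way on $\norm v\le2\kappa$.

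Summing over an epoch of length $q$ with Freedman's inequality and an energy-peeling argument then bounds the accumulated errors by roughly $\kappa\theta\sqrt{q\kappa\Lambda^2/b}$ in curvature norm. With $b=C_b\kappa q\Lambda^2$ this is a small multiple of $\theta$. After multiplying by the $\kappa^2$ sensitivities and using $\theta=c_\theta\kappa^{-2}$ together with small $c_\tau,c_\nu$, the result should be at most $\nu$-level accuracy, about $\varepsilon/8$. A union bound over the $T+1$ steps and the inner solves gives $\Pr(\mathcal E^c)\le\delta$. The displacement checks hold on $\mathcal E$ because the tracking errors stay small.

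\textbf{Cost.} There are $T/q$ refreshes. Each costs $O(N)$ for the fields, plus Katyusha warm-started solves of cost $\Ot(N+\sqrt{N\kappa})$, where warm starts keep the logarithms bounded. The first refresh adds the initial-distance log. The refreshes therefore cost
\[
\Ot\!\left(N+\tfrac{T}{q}N\right)=\Ot\!\left(N+\sqrt{N\kappa}\,\kappa^3\varepsilon^{-2}\right).
\]
Each of the remaining steps solves a $b$-component strongly convex problem with condition number $O(\kappa)$, at cost $\Ot(b+\sqrt{b\kappa})=\Ot(\kappa q)$ since $b\ge\kappa$. Over $T$ steps this is $\Ot(T\kappa q)=\Ot(\sqrt{N\kappa}\,\kappa^3\varepsilon^{-2})$. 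The evaluation batches add another $O(Tb)$, of the same order. The two contributions sum to \eqref{eq:main-complexity}, and only gradient oracles are used.
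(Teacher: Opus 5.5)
Your cost count, the reduction to Proposition~\ref{prop:baseline} for $N\le\kappa$, and the bound on the failure contribution via $B_T$ all match the paper. The accuracy argument, however, has a genuine gap. It rests on an event $\mathcal E$ of probability $1-\delta$ on which $\norm{e_t}\le\varepsilon/8$ for \emph{every} $t$, and Algorithm~\ref{alg:main} with \eqref{eq:parameters} does not provide such an event.

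Your own arithmetic shows the problem. The accumulated tracking error you obtain is a small multiple of $\theta$ (in fact of $\kappa\theta$). After the $\kappa^2$ sensitivity of Lemma~\ref{lem:fields} it is of order $c_\theta C_b^{-1/2}$ or larger. Since $\theta=c_\theta\kappa^{-2}$ does not involve $\varepsilon$, this is not $O(\varepsilon)$. The constants $c_\tau,c_\nu$ only control the tolerance and peeling-floor terms $h(\kappa^7\tau^2+\kappa^6\nu^2)$.

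This is not just a loose estimate. Outer steps may have length up to $\theta$, and the recursive estimators' errors scale with the realized movement. For instance, with $v$-corrections of size $\kappa^3\theta$, the $C$ evaluation martingale is only controlled at the level $\norm{M^C_t}^2\lesssim\Lambda q\kappa^6\theta^2/b\asymp c_\theta^2\kappa/(C_b\Lambda)$. While $\norm{g_t}$ is of order one, the error is a small fraction of $\norm{g_t}$, not of $\varepsilon$. A uniform $\varepsilon$-level error would need $\varepsilon$-dependent step lengths, as in normalized SPIDER-type methods, which this algorithm does not use.

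The missing idea is to keep the error in summed, movement-dependent form and pay for it with the descent. Lemma~\ref{lem:epoch-main} bounds the sum of $\norm{e_t}^2$ over each epoch, pathwise on $\mathcal T$, by $C\kappa^7q\Lambda b^{-1}$ times the epoch's $\sum_t\norm{x_{t+1}-x_t}^2$, plus tolerance terms. Predictable energy peeling is needed precisely because this energy budget is the realized movement. The worst-case caps involving $\theta$ serve only localization and radius control.

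Since $\ip{\widehat G_t}{g_t}\ge\norm{g_t}^2$, inequality \eqref{eq:main-descent} holds on every step, clipped or not. Substituting $\norm{x_{t+1}-x_t}=\alpha\norm{g_t}$, the movement term carries the coefficient $C\alpha^2\kappa^7q\Lambda/b=Cc_\alpha^2\kappa q\Lambda/b$. This coefficient is small, so it is absorbed, giving summed bounds on $\norm{g_t}^2$ and $\norm{e_t}^2$.

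Because the error is then controlled only on average, you also need a step your proposal omits: a lower bound on $|\mathcal U|$. Each clipped step contributes $\Gamma^2\ge100\kappa^2$ to $\sum_t\norm{g_t}^2\le C\bar\Delta_H/\alpha+Cc_0T\varepsilon^2$. Hence at most $T/2$ steps are clipped and $|\mathcal U|\ge T/2$. Only this bound lets the uniform draw from $\mathcal U$ inherit the averaged estimate. Your clipped-step argument also relied on the per-step bound $\norm{e_t}\le\Gamma/2$, which is not available.
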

The proof appears in Appendices~\ref{app:local}--\ref{app:cost}. The result allows nonlinear lower objectives and noncommuting component Hessians. Componentwise strong convexity is used in both the sampled root analysis and the accelerated finite-sum solves.

The epoch estimate permits $b=\widetilde\Theta(\kappa q)$, giving the amortized cost
\begin{equation}\label{eq:balance}
 \Ot(N/q+b)=\Ot(N/q+\kappa q).
\end{equation}
Balancing the terms gives $q\asymp\sqrt{N/\kappa}$ and cost $\Ot(\sqrt{N\kappa})$ per step. Together with $T=O(\kappa^3\varepsilon^{-2})$, this proves \eqref{eq:main-complexity}. An additional polynomial factor of $\kappa$ in the batch size would increase the final condition-number exponent.

\subsection{An epoch estimate for the nonlinear corrections}
We outline the epoch estimate that determines the condition-number dependence. Index a correction by its endpoint $t$ and let
\[
 H^A_t=\nabla^2_{yy}g(x_t,z_{t-1}),\qquad
 H^B_t=\nabla^2_{yy}g(x_t,z_t+v_{t-1}/\lambda).
\]
These matrices are used only in the analysis. For a tracker $F$, let $\bar u_t$ be the population corrected root, $u_t$ its sampled approximation, and $d_t=u_t-u_{t-1}$. Define
\begin{equation}\label{eq:energy-definitions}
 Q_t=\norm{\bar u_t-u_{t-1}}_{H_t}^2,\qquad
 E_t=\norm{u_t-\bar u_t}_{H_t}^2,\qquad
 J_t=\norm{d_t}_{H_t}^2.
\end{equation}
Let $D_t$ bound the component change caused by the new context at the old inner point. Conditional concentration and local curvature comparison give
\begin{equation}\label{eq:main-energy}
 E_t\lesssim\frac{\kappa\Lambda}{b}(Q_t+D_t^2)+\kappa^2\tau^2.
\end{equation}
The population endpoint in \eqref{eq:main-energy} is fixed before its correction batch is drawn, so the concentration bound is applied at a sample-independent point.

The independent evaluation batch gives a martingale error $M_t=F(w_t)-\widehat F_t$. Predictable stopping and peeling bound $\norm{M_t}^2$ by the realized energy $\sum_{s\le t}(J_s+D_s^2)$. Combining this bound with \eqref{eq:main-energy} gives the following lemma.
\begin{lemma}[Epoch error]\label{lem:epoch-main}
There is an event $\mathcal T$ with probability at least $1-\delta$ on which all safeguards pass and, for every epoch $\mathcal E$ of actual length $h\le q$,
\begin{equation}\label{eq:epoch-main}
 \sum_{t\in\mathcal E}\norm{\widehat G_t-\nabla H_\lambda(x_t)}^2
 \le C\frac{\kappa^7q\Lambda}{b}
       \sum_{t\in\mathcal E}\norm{x_{t+1}-x_t}^2
       +Ch\bigl(\kappa^7\tau^2+\kappa^6\nu^2\bigr).
\end{equation}
The inequality holds pathwise on $\mathcal T$.
\end{lemma}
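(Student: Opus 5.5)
We split the estimation error at each $t$ in the epoch as
\[
 \widehat G_t-\nabla H_\lambda(x_t)
 =\bigl[\widehat G_t-C(w_t)\bigr]
 +\bigl[C(w_t)-C(x_t,z_t,v^*(x_t,z_t))\bigr]
 +\bigl[C(x_t,z_t,v^*(x_t,z_t))-C(x_t,z^*(x_t),v^*(x_t,z^*(x_t)))\bigr],
\]
using \eqref{eq:exact-field} for the last term. By Lemma~\ref{lem:fields}, the second bracket is $O(1)\norm{v_t-v^*(x_t,z_t)}$ and the third is $O(\kappa^2)\norm{z_t-z^*(x_t)}$. Strong monotonicity ($\mu/2$) converts these root distances into residual norms: $\norm{z_t-z^*(x_t)}\le\kappa\norm{A(x_t,z_t)}$ and $\norm{v_t-v^*}\le2\kappa\norm{B(w_t)}$. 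The population residuals in turn decompose as
\[
 A(x_t,z_t)=\bigl[A(x_t,z_t)-\widehat A_t\bigr]+\widehat A_t,
\]
and similarly for $B$.

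Within an epoch, the tracked $\widehat A_t$ equals $r^z_{t-1}(z_t)$ plus the increment difference $A_{R_{t-1}}-A_{S^z_{t-1}}$ evaluated along the step. Thus $\norm{\widehat A_t}\le\tau$ plus the sampled-root error measured by $E_t$ from \eqref{eq:main-energy}, while $A(x_t,z_t)-\widehat A_t=M_t$ is the martingale term. Squaring and collecting weights gives
\[
 \norm{\widehat G_t-\nabla H_\lambda(x_t)}^2
 \lesssim \norm{M^C_t}^2+\kappa^2\norm{M^B_t}^2+\kappa^6\norm{M^A_t}^2
 +\kappa^{6}\kappa\,(E^A_t+E^B_t)+\kappa^7\tau^2 .
\]
The factor $\kappa$ on the energies comes from $\norm u^2\le\kappa\norm u_H^2$.

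Next we control the martingales on a high-probability event. For each tracker, $M_t=\sum_{s\le t}\xi_s$ with conditionally centered increments $\xi_s$ given by the $R_{s-1}$ sample differences. Each $\xi_s$ has conditional second moment $\lesssim b^{-1}\norm{w_s-w_{s-1}}^2$, with sub-Gaussian tails because the component fields are Lipschitz on the safeguarded set. A vector Freedman inequality with predictable stopping, peeled over dyadic energy levels and union-bounded over $t\le T$ (which costs the factor $\Lambda$), gives
\[
 \norm{M_t}^2\lesssim\frac{\Lambda}{b}\sum_{s\le t}\bigl(\kappa J_s+D_s^2\bigr)
\]
inside the epoch. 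The same event includes the inner-solver successes and the safeguard checks, at total failure probability at most $\delta$; this defines $\mathcal T$.

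Finally, everything is expressed in terms of $\norm{x_{s+1}-x_s}$. Since $z$ tracks a $\kappa$-Lipschitz root and $v$ tracks a $\kappa^2$-Lipschitz root (Lemma~\ref{lem:fields}), we get $D_s\lesssim\kappa\norm{x_s-x_{s-1}}$ plus lower-order terms, and $\sqrt{Q_s}$, $\sqrt{J_s}\lesssim\kappa^{2}\norm{x_s-x_{s-1}}+\sqrt{E_s}+\kappa\tau$. The resulting recursion couples $E$ and $J$, but the coupling coefficient is $\kappa\Lambda/b\ll1$ by the choice $b\ge C_b\kappa q\Lambda^2$, so it can be absorbed. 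Summing over the $h\le q$ steps of the epoch, each martingale bound is a partial sum, so this produces the factor $q$:
\[
 \sum_{t\in\mathcal E}\sum_{s\le t}(\cdot)\le h\sum_{s\in\mathcal E}(\cdot).
\]
Collecting the powers gives $\kappa^6\cdot\kappa$ from the $z$-path, giving the $\kappa^7q\Lambda/b$ coefficient. The residual slack $\nu$ absorbs the penalty-Hessian drift between $H_t$ and the true Jacobians, giving $h\kappa^6\nu^2$.

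The main obstacle is keeping only $\kappa^7$ rather than higher powers. This requires the curvature-weighted energy \eqref{eq:main-energy} to concentrate with factor $\kappa/b$, not $\kappa^2/b$. We will verify it by comparing each sampled Hessian to $H_t$ on the small displacement guaranteed by the safeguards $\norm{z_{t+1}-z_t}\le1/(512\kappa)$, using the Hessian Lipschitz assumption.
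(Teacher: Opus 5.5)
Your skeleton matches the paper's: the same three-term split of $\widehat G_t-\nabla H_\lambda(x_t)$, a per-tracker energy recursion absorbed using $\kappa\Lambda/b\ll1$, peeled Freedman bounds, and partial sums producing the factor $q$. The gap is in the $\kappa$-accounting. Curvature weighting is needed in more places than \eqref{eq:main-energy}, and each Euclidean estimate you use elsewhere loses a factor $\kappa$.

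(i) Your martingale variance proxy $\kappa J_s+D_s^2$ is essentially the Euclidean $\norm{d_s}^2$. The paper uses co-coercivity of the component gradients, $\E_i\norm{F_i(s_t,u_t)-F_i(s_t,u_{t-1})}^2\le2\ip{F(s_t,u_t)-F(s_t,u_{t-1})}{d_t}\lesssim J_t$, so it can take $W_t=J_t+D_t^2$.

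(ii) Bounding $\sqrt{Q_s},\sqrt{J_s}$ through Euclidean Lipschitz constants of the roots gives at best $Q_t\lesssim\kappa^2D_t^2$. The paper applies Lemma~\ref{lem:localinverse} in $H_t$-norms, $Q_t\le4\norm{a_t}_{H_t^{-1}}^2\lesssim\kappa D_t^2+E_{t-1}+V_{t-1}$, and the absorbed recursion then gives $\sum_t(Q_t+J_t)\lesssim\kappa U$.

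(iii) Going through the residual, $\norm{z_t-z^*(x_t)}\le\kappa\norm{A(x_t,z_t)}$, charges the correction error $k_t\approx H_te_t$ at least $\kappa^2E_t$. Your display even carries $\kappa^7E^B_t$, which alone contributes $\kappa^{13}\Lambda X/b$. The paper instead splits through the population corrected root: $\norm{u_t-u^*(s_t)}^2\lesssim\kappa^2\norm{M_{t-1}}^2+\kappa E_t$.

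With your bounds, the $z$-path coefficient in front of $q\Lambda X/b$ is $\kappa^4\cdot\kappa^2\cdot\kappa^3=\kappa^9$. The descent coefficient is then $c_\alpha^2\kappa^3q\Lambda/b\asymp\kappa^2/\Lambda$ for $b\asymp\kappa q\Lambda^2$, which cannot be absorbed. You would need $b\gtrsim\kappa^3q\Lambda$, which degrades the final exponent to $9/2$. So the sentence ``collecting the powers gives $\kappa^6\cdot\kappa$'' does not follow from your preceding steps.

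Two further points. First, in the paper $\nu$ is not a slack for Hessian drift. It fixes the floor $w_0=b\nu^2/(C'\Lambda)$ of the dyadic peeling in Lemma~\ref{lem:peeling}, so the martingale bound reads $\norm{M_t}^2\le C\Lambda b^{-1}\sum_{s\le t}W_s+\nu^2$; without a floor the union over energy levels is not finite. The term $h\kappa^6\nu^2$ is simply $\kappa^4\cdot\kappa^2\nu^2$ from the $z$-tracking, and your martingale bound omits this additive term.

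Second, you cannot just place the safeguard checks inside $\mathcal T$. The lemma asserts that they pass with probability at least $1-\delta$. The Hessian comparisons \eqref{eq:localoscillation} you plan to use hold only in the local balls, so invoking $\norm{z_{t+1}-z_t}\le1/(512\kappa)$ presupposes what must be proved. The paper supplies this through the sampling-order induction of Lemma~\ref{lem:induction}:
\begin{itemize}
\item Lemma~\ref{lem:localinverse} gives $\norm{\Delta z_t}\le C_A\kappa\theta$.
\item The peeled bound gives $\norm{M^B_t}\le1/32$, hence $\norm{v_t}<2\kappa$.
\item The evaluation process is stopped predictably before any violating correction, so the peeling lemma applies without conditioning on future events.
\end{itemize}
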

The proof appears in Appendices~\ref{app:local}--\ref{app:epoch}. Let $X$ be the outer squared movement over an epoch and omit tolerance terms. The first tracker gives $\sum\norm{\Delta z_t}^2\lesssim\kappa^2X$. The second then receives context energy $O(\kappa^4X)$ and contributes error $O(\kappa^7q\Lambda X/b)$. The recursive error in $C$ is $O(\kappa^6q\Lambda X/b)$.

For the descent argument, smoothness gives
\begin{equation}\label{eq:main-descent}
 H_\lambda(x_{t+1})\le H_\lambda(x_t)
 -\frac\alpha4\norm{g_t}^2
 +\frac\alpha2\norm{\widehat G_t-\nabla H_\lambda(x_t)}^2.
\end{equation}
Substituting \eqref{eq:epoch-main} gives the movement coefficient
$C\alpha^2\kappa^7q\Lambda/b=Cc_\alpha^2\kappa q\Lambda/b$, which is small enough to absorb. The resulting bound ensures at least $T/2$ unclipped steps on $\mathcal T$, controlling the denominator in uniform sampling from $\mathcal U$. Clipping bounds the gradient at any returned point by $B_T$, so $\mathcal T^c$ contributes at most $\delta B_T^2$. Lemma~\ref{lem:penalty} transfers stationarity from $H_\lambda$ to $\Phi$.

\section{Numerical Experiments}\label{sec:numerics}
We compare practical implementations of the four methods in Table~\ref{tab:complexity}. All use $H_\lambda$ with $\lambda=\kappa^3/\varepsilon$ and outer updates $x_{t+1}=x_t-\alpha g_t$. The two penalty methods use full-pass gradient descent and accelerated gradient descent, respectively. The full-refresh method uses $q=1$ in Algorithm~\ref{alg:main}; SICBO uses $q=\lceil\sqrt{N/\kappa}\rceil$ and $b=\lceil\kappa q\rceil$. We set $\tau=\varepsilon\kappa^{-4}$ on the synthetic problem and use the tolerance floor below on real data. SICBO monitors the movement and radius checks without terminating at violations. Appendix~\ref{app:numerics} details these implementations and their departures from \eqref{eq:parameters}.

\begin{figure}[t]
\centering
\draftfigure{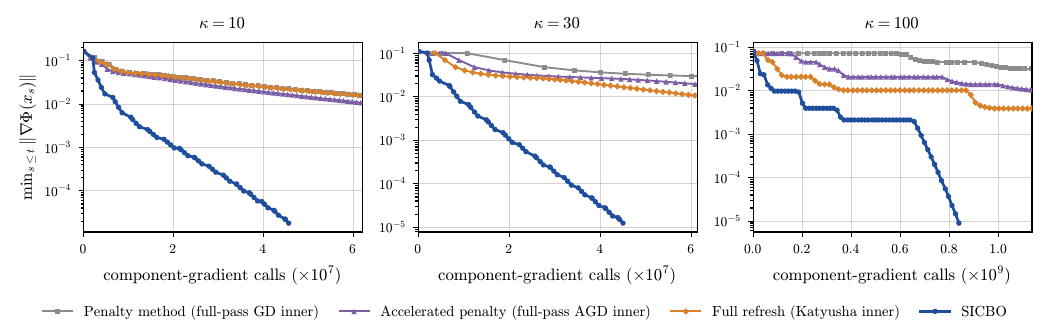}
\caption{Running minimum of $\norm{\nabla\Phi(x_t)}$ against component-gradient calls on \eqref{eq:comparison-family}. Markers are outer iterations. For the full-refresh benchmark and SICBO, each curve averages ten seeds; the two penalty methods are deterministic.}\label{fig:comparison}
\end{figure}

\subsection{Synthetic problem}\label{sec:num-synthetic}
We take $n=m=10^4$, so that $N=2\cdot10^4$, $d_x=d_y=8$, and $\kappa\in\{10,30,100\}$. The components are
\begin{equation}\label{eq:comparison-family}
\begin{aligned}
 g_j(x,y)&=\frac12y^\top H_jy-y^\top K_jx+b_j^\top y+\psi(w^\top y),
 &\psi''(u)&=\mu\sin\frac{u}{\sqrt\mu},\\
 f_i(x,y)&=\frac{7}{10}\sum_{k}\rho(y_k-t_{ik})+\frac{1}{50}\sum_k\omega(x_k),
 &\rho(t)&=\frac12\log(1+t^2),\quad \omega(t)=\frac{t^2}{1+t^2},
\end{aligned}
\end{equation}
where the component Hessians $H_j$ do not commute and $K_j=H_jM+E_j$ for a fixed matrix $M$ (Appendix~\ref{app:num-synthetic}). All methods use a common step $\alpha$, and the target is $\varepsilon=10^{-4}\norm{\nabla\Phi(x_0)}$.

Figure~\ref{fig:comparison} shows that all methods take essentially the same number of outer iterations. SICBO has the lowest cost per iteration and reaches the target with the fewest calls for every $\kappa$. Its advantage over the full-refresh benchmark decreases at the rate $\kappa^{-1/2}$ predicted by \eqref{eq:balance} (Appendix~\ref{app:num-synthetic}).

\subsection{Real data}\label{sec:num-real}
We also consider per-feature ridge-weight selection for logistic regression, a finite-sum benchmark used by \citet{saba,srba,spaba}. Let $(a_j,b_j)$ and $(a'_i,b'_i)$ be the $m$ training and $n$ validation rows, respectively, with $\norm{a_j}=\norm{a'_i}=1$ and labels in $\{\pm1\}$. Define $\phi(u)=\log(1+e^{-u})$ and
\begin{equation}\label{eq:real-reg}
 g_j(x,y)=\phi\bigl(b_j\ip{a_j}{y}\bigr)+\frac12\sum_{k=1}^{p}r_k(x)\,y_k^2,\qquad
 f_i(x,y)=\phi\bigl(b'_i\ip{a'_i}{y}\bigr),\qquad
 r_k(x)=\mu+\frac{9\mu}{1+e^{-x_k}},
\end{equation}
so $x\in\R^p$ sets one ridge weight per feature. We use the LIBSVM data sets a9a, w8a and ijcnn1 \citep{libsvm}, with $N$ from $3.3\cdot10^{4}$ to $1.4\cdot10^{5}$, $\kappa\in\{100,1000\}$ and $\varepsilon=10^{-3}\norm{\nabla\Phi(x_0)}$. The inner tolerance is $\tau=\max\{\varepsilon\kappa^{-4},10^{-11}\}$. Each method uses its own tuned step, and the penalty methods are stopped after three times SICBO's mean cost at the reference step multiplier $c=1$ (Appendix~\ref{app:num-real}). These real-data objectives do not satisfy Assumption~\ref{ass:smooth} globally, as detailed in the same appendix.

\begin{figure}[t]
\centering
\draftfigure{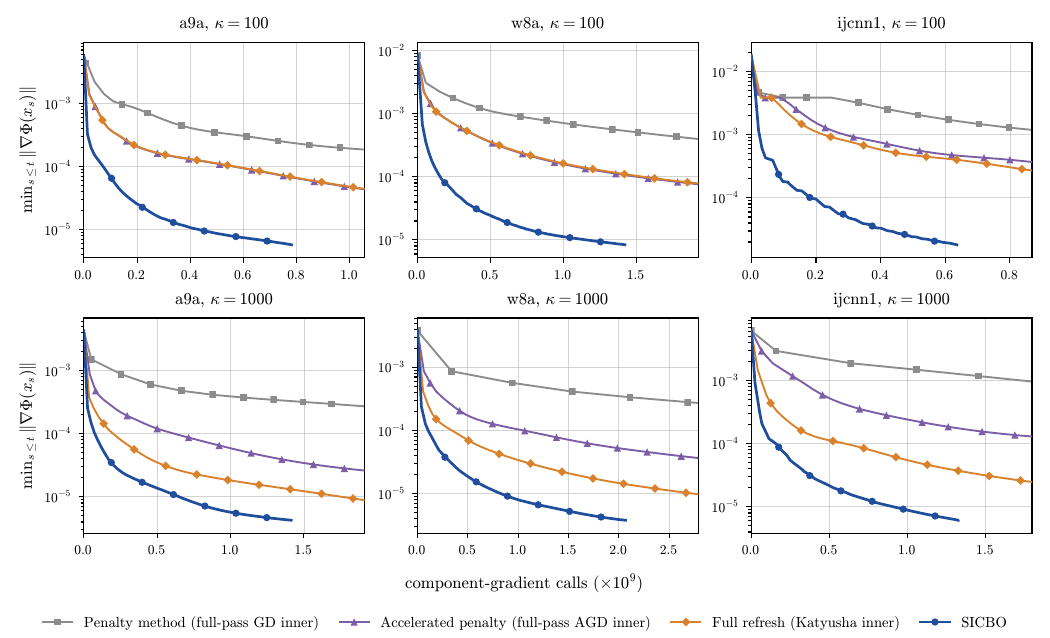}
\caption{Running minimum of $\norm{\nabla\Phi(x_t)}$ against component-gradient calls on \eqref{eq:real-reg}, with the step tuned for each method. Lines join the outer iterations; markers are drawn at a few of them and are offset between methods, so that coinciding curves remain distinguishable. For the full-refresh benchmark and SICBO, each curve averages ten seeds; the penalty methods are deterministic.}\label{fig:real}
\end{figure}

Figure~\ref{fig:real} shows that SICBO reaches the target with the fewest calls in all six settings. The full-refresh benchmark is second, except on a9a and w8a with $\kappa=100$, where the accelerated penalty method has similar cost. On each data set, SICBO's advantage over full refresh decreases from $\kappa=100$ to $\kappa=1000$ and remains nearly the same fraction of $\sqrt{N/\kappa}$, consistent with \eqref{eq:balance} (Appendix~\ref{app:num-real}).

\FloatBarrier
\begin{samepage}
\section{Conclusion}\label{sec:conclusion}
We proposed SICBO for finite-sum nonconvex--strongly-convex bilevel optimization. Its component-gradient complexity $\Ot(N+\sqrt N\kappa^{7/2}\varepsilon^{-2})$ is optimal up to logarithms in $N$ and $\varepsilon$ for fixed $\kappa$ and preserves the best known deterministic fully first-order condition-number exponent for smooth problems. The analysis combines curvature-dependent correction bounds with pathwise control of evaluation errors. Experiments on synthetic and LIBSVM data show the fewest component-gradient calls among the compared methods, with an advantage over full refresh that decreases as $\kappa^{-1/2}$. Extending the result to average lower strong convexity, obtaining a single-loop method, and establishing the sharp joint dependence on $N$ and $\kappa$ remain open.

\label{page:main-end}
\par
\end{samepage}
\clearpage
\section*{Acknowledgements}
The authors thank Lesi Chen of Tsinghua University for helpful discussions of this open problem. Lesi Chen showed the initial upper bound stated in Proposition~\ref{prop:baseline} and conjectured the sharper bound established in this manuscript.

Codex with GPT-6 Astra and Claude Science Opus 5.5 were used to assist with the development of the construction and proofs, numerical checks of algebraic identities, the review of related literature, and the preparation and revision of this manuscript. The authors are responsible for the mathematical claims, references, and final presentation.


\clearpage
\appendix
\numberwithin{equation}{section}
\section{Properties of the Component Fields and Penalty}\label{app:fields}
\subsection{Proof of Lemma~\ref{lem:fields}}
For fixed $(x,z)$, the field $B_{ij}(x,z,\cdot)$ is the gradient of
\begin{equation}\label{eq:component-potential}
 \varphi_{ij}(v)=\lambda f_i(x,z+v/\lambda)
 +\lambda^2[g_j(x,z+v/\lambda)-g_j(x,z)]
 -\lambda\ip{\nabla_y g_j(x,z)}v.
\end{equation}
The first term is $1/\lambda$-smooth and $1/\lambda$-weakly convex; the second is $\mu$-strongly convex and $1$-smooth. Thus $\varphi_{ij}$ is $(\mu-1/\lambda)$-strongly convex and $(1+1/\lambda)$-smooth, giving the claimed constants for $\lambda\ge2\kappa$.

Let $y=z+v^*/\lambda$. The population root equation gives
\[
 \lambda[\nabla_y g(x,y)-\nabla_y g(x,z)]=-\nabla_y f(x,y).
\]
Taking the inner product with $y-z$, strong convexity and Assumption~\ref{ass:gap} give $\norm{v^*}\le1/\mu=\kappa$. Comparing the optimality equations at $x,x'$ also gives
$\norm{z^*(x)-z^*(x')}\le\kappa\norm{x-x'}$.

For $a\in\{x,y\}$, the fundamental theorem of calculus gives
\begin{equation}\label{eq:integral-field}
 \lambda[\nabla_a g_j(x,z+v/\lambda)-\nabla_a g_j(x,z)]
 =\int_0^1\nabla^2_{ay}g_j(x,z+s v/\lambda)v\,ds.
\end{equation}
Hessian Lipschitzness bounds the context Lipschitz constant by $O(1+\norm v)=O(\kappa)$ on $\norm v\le2\kappa$. Gradient Lipschitzness gives the global $O(1)$ bound in $v$. Strong monotonicity of $B$ then yields
\[
 \norm{v^*(x,z)-v^*(x',z')}\lesssim
 \kappa^2\bigl(\norm{x-x'}+\norm{z-z'}\bigr).
\]
Combining these bounds proves the composite Lipschitz estimate for $C$.

\subsection{Proof of Lemma~\ref{lem:penalty}}
Let $z=z^*(x)$ and $y=y^*_{\lambda}(x)$ be the unique minimizers in \eqref{eq:penalty}. The envelope formula gives
\[
 \nabla H_\lambda(x)=\nabla_x f(x,y)
             +\lambda[\nabla_x g(x,y)-\nabla_x g(x,z)].
\]
The optimality equations give $v=\lambda(y-z)=v^*(x,z)$, proving \eqref{eq:exact-field}. Lemma~\ref{lem:fields} bounds the Lipschitz constants of $x\mapsto v^*(x,z^*(x))$ and $\nabla H_\lambda$ by $O(\kappa^3)$.

To bound the bias, set $H=\nabla^2_{yy}g(x,z)$ and $K=\nabla^2_{xy}g(x,z)$. Taylor expansion of the lower gradient and Lipschitzness of the upper gradient imply
\[
 \norm{Hv+\nabla_y f(x,z)}
 \le\frac{\norm v}{\lambda}+\frac{\norm v^2}{2\lambda}
 \lesssim\frac{\kappa^2}{\lambda}.
\]
Therefore
\[
 \norm{v+H^{-1}\nabla_y f(x,z)}\lesssim\kappa^3/\lambda.
\]
The implicit function theorem and the chain rule give
\[
 \nabla\Phi(x)=\nabla_x f(x,z)-KH^{-1}\nabla_y f(x,z).
\]
Combining this identity with \eqref{eq:integral-field} for the $x$ block proves the bias bound in \eqref{eq:penalty-bounds}.

Evaluating the first minimum in \eqref{eq:penalty} at $z$ gives $H_\lambda\le\Phi$. Conversely, for every $y$,
\[
 f(x,y)+\lambda[g(x,y)-g(x,z)]
 \ge\Phi(x)-\norm{y-z}+\frac{\lambda\mu}{2}\norm{y-z}^2
 \ge\Phi(x)-\frac{1}{2\lambda\mu}.
\]
Minimizing over $y$ proves the value bounds and
$H_\lambda(x_0)-\inf H_\lambda\le\Delta+\kappa/(2\lambda)$.

\section{Local Curvature and Evaluation Martingales}\label{app:local}
\subsection{Local inverse and correction error}
Use the reference matrices of Section~\ref{sec:analysis} at the old inner point and new context: $s=x$ for $A$, and $s=(x,z)$ for $B$ after the independent $z$ correction. The $B$ reference is the lower Hessian, with a $1/\lambda$-Lipschitz gradient perturbation from $f$.

\begin{lemma}[Local inverse]\label{lem:localinverse}
Let $F$ be the gradient of a globally $\mu_0$-strongly convex differentiable potential, and let $H\succeq\mu_0 I$. Suppose throughout the Euclidean ball of radius $r$ about $u_0$ the map $F$ obeys
\[
 \ip{F(u)-F(v)}{u-v}\ge\frac12\norm{u-v}_H^2,
 \qquad \norm{F(u)-F(v)}_{H^{-1}}\le2\norm{u-v}_H.
\]
For twice differentiable potentials, Jacobians between $H/2$ and $2H$ suffice. If
\[
 \norm a_{H^{-1}}\le r\sqrt{\mu_0}/16,
\]
then the root of $F(u)-F(u_0)+a=0$ lies inside that ball and satisfies
\[
 \norm{u-u_0}_H\le2\norm a_{H^{-1}}.
\]
The same conclusion holds around another point strictly inside the ball when the indicated $H$-ellipsoid remains in the ball.
\end{lemma}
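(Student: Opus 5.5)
The plan is to reduce the statement to a strongly monotone root-finding problem in the $H$-geometry and to use a continuation (or fixed-point) argument to show that the root cannot leave the ball. Set $\Psi(u)=F(u)-F(u_0)+a$. Since $F$ is the gradient of a globally $\mu_0$-strongly convex potential, $\Psi$ is the gradient of a strongly convex function. It therefore has a unique global root $u^\star$, so existence and uniqueness are automatic. The whole task is to show that $u^\star$ lies in the ball $\mathbb B=\{u:\norm{u-u_0}\le r\}$ and to bound $\norm{u^\star-u_0}_H$.

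The key step is an a priori estimate valid whenever a root lies in $\mathbb B$. If $u\in\mathbb B$ and $\Psi(u)=0$, then the local monotonicity hypothesis and Cauchy--Schwarz in the dual pair $(H,H^{-1})$ give
\[
\tfrac12\norm{u-u_0}_H^2\le\ip{F(u)-F(u_0)}{u-u_0}=-\ip{a}{u-u_0}\le\norm a_{H^{-1}}\norm{u-u_0}_H .
\]
Hence $\norm{u-u_0}_H\le2\norm a_{H^{-1}}$. From $H\succeq\mu_0 I$ we get $\norm{u-u_0}\le\norm{u-u_0}_H/\sqrt{\mu_0}\le 2\norm a_{H^{-1}}/\sqrt{\mu_0}\le r/8$. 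So any root inside the ball actually lies deep inside it, well away from the boundary.

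To rule out a root outside $\mathbb B$, I would run a homotopy: let $u(s)$ be the unique global root of $F(u)-F(u_0)+sa=0$ for $s\in[0,1]$. Global strong monotonicity makes $s\mapsto u(s)$ continuous, and in fact $(1/\mu_0)$-Lipschitz in $s\norm a$. We have $u(0)=u_0$. Applying the a priori estimate to $sa$ shows that whenever $u(s)\in\mathbb B$, actually $\norm{u(s)-u_0}\le r/8$. The set $\{s:u(s)\in\mathbb B\}$ is closed and contains $0$. It is also open in $[0,1]$, because any point in it has $u(s)$ at distance at most $r/8$ from $u_0$, and continuity keeps $u(s')$ in $\mathbb B$ for nearby $s'$. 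By connectedness the set is all of $[0,1]$, so $u^\star=u(1)\in\mathbb B$ and the stated bound holds.

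The Jacobian remark follows by integrating along segments. If $H/2\preceq\nabla F\preceq 2H$ in the Loewner sense on $\mathbb B$ (with the Jacobian symmetric, since it is a Hessian), then the averaged Jacobian $\bar J=\int_0^1\nabla F(v+t(u-v))\,dt$ also satisfies $H/2\preceq\bar J\preceq 2H$. The inner-product bound is then immediate. The dual-norm bound follows because $H^{-1/2}\bar JH^{-1/2}$ is symmetric with spectrum in $[1/2,2]$, so $\norm{\bar J w}_{H^{-1}}\le2\norm w_H$. Only the monotonicity hypothesis is actually needed above; the Lipschitz bound is recorded for later use.

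For the final sentence, the same argument is repeated around a point $u_1$ strictly inside the ball. The a priori estimate gives $\norm{u-u_1}_H\le2\norm a_{H^{-1}}$ for any root within the region where the hypotheses hold. If the ellipsoid $\{u:\norm{u-u_1}_H\le2\norm a_{H^{-1}}\}$ is contained in the open ball, the continuation argument goes through with that ellipsoid replacing the $r/8$-ball.

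The only delicate point is the openness step. It needs the bounds to hold up to, and slightly beyond, the inner region, which the slack factor (a radius of $r/8$ versus $r$) provides. I do not expect other obstacles.
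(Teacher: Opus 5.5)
Your proof is correct, but it localizes the root by a different mechanism than the paper.

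The paper's argument is variational and takes one step. It minimizes the shifted potential, whose gradient is $F(u)-F(u_0)+a$, over the $H$-ellipsoid $\{u:\norm{u-u_0}_H\le r\sqrt{\mu_0}/2\}$, which lies in the Euclidean ball since $H\succeq\mu_0 I$. On the boundary of this ellipsoid, with $d=u-u_0$, the monotonicity hypothesis gives $\ip{F(u)-F(u_0)+a}{d}\ge\frac12\norm d_H^2-\norm a_{H^{-1}}\norm d_H>0$. So the constrained minimizer is interior, hence a critical point, hence the unique global root by global strong convexity. The norm bound is then exactly your a priori estimate.

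You instead take existence of the global root from strong convexity and stop it from escaping by continuation along $s\mapsto u(s)$. You combine the $(\norm a/\mu_0)$-Lipschitz dependence of the root on $s$ with the a priori implication ``root in the ball $\Rightarrow$ root within $r/8$'' to show that the set of good $s$ is clopen.

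What each route buys:
\begin{itemize}
\item The paper's route is shorter. It produces a root inside the ellipsoid from local information plus differentiability alone; global strong convexity serves only for uniqueness. The recentered clause follows by literally recentering the ellipsoid.
\item Your route never minimizes over a constrained set and works only with the map $F$. It would therefore carry over to strongly monotone maps that are not gradients, once existence of roots is supplied. The cost is that you use global strong monotonicity twice: for existence of each $u(s)$ and for continuity of the path.
\end{itemize}

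You also verify the Jacobian remark by Loewner averaging along segments, which the paper states without proof.
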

\begin{proof}
Minimize the shifted potential on the $H$-ellipsoid of radius $r\sqrt{\mu_0}/2$ about $u_0$, which lies in the Euclidean ball. On its boundary, with $d=u-u_0$,
\[
 \ip{F(u)-F(u_0)+a}{d}
 \ge\frac12\norm d_H^2-\norm a_{H^{-1}}\norm d_H>0.
\]
The minimizer is interior and hence is the unique global root by strong convexity. Applying the same inequality at this root gives the norm bound.
\end{proof}

For one correction, condition on the contexts $s_0,s_1$, starting point $u_0$, and correction $h$ before drawing its batch. Put $p_i=F_i(s_1,u_0)-F_i(s_0,u_0)$, with $\norm{p_i}\le D$. Let $\bar u$ solve $F(s_1,\bar u)-F(s_0,u_0)+h=0$, and set $\bar d=\bar u-u_0$, $Q=\norm{\bar d}_H^2$. Each $F_i(s_1,\cdot)$ is a $\mu_0$-strongly monotone gradient with Lipschitz constant at most two, where $\mu_0\ge1/(2\kappa)$. Take symmetric references $\mu_0I\preceq\mathcal H_i\preceq2I$, $H=\E_i\mathcal H_i$, such that for all $u,v$ in the local ball
\begin{equation}\label{eq:localoscillation}
 \norm{F_i(s_1,u)-F_i(s_1,v)-\mathcal H_i(u-v)}
 \le\frac{\mu_0}{32}\norm{u-v}.
\end{equation}
For $A$ and $B$, the component lower Hessians satisfy \eqref{eq:localoscillation} by Hessian Lipschitzness of $g_j$ and the $1/\lambda$ gradient perturbation from $f_i$.

\begin{lemma}[Curvature-weighted correction]\label{lem:localweighted}
Suppose the population corrected root lies in the inner quarter of the local ball and $Q+D^2$ is small enough for the ellipsoids below to remain in that ball. Conditional on the fixed inputs, outside an event of probability at most $C d\exp(-c\Lambda)$, the computed point $u_S$ satisfies
\begin{equation}\label{eq:pathE}
 E:=\norm{u_S-\bar u}_H^2
 \le C\frac{\kappa\Lambda}{b}(Q+D^2)+C\kappa^2\tau^2.
\end{equation}
Here the residual of the computed sampled solve is at most $\tau$, and $b\ge C\kappa\Lambda$.
\end{lemma}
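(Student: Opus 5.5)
The idea is to compare the sampled corrected root with the population corrected root $\bar u$ through Lemma~\ref{lem:localinverse}. We then bound the perturbation by matrix/vector concentration in the $H^{-1}$ norm, exploiting the fact that $\bar u$ is fixed before the batch is drawn.

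Let $u_S^\star$ be the exact root of the sampled residual
\[
 F_S(s_1,u)-F_S(s_0,u_0)+h=0.
\]
Evaluating this residual at $\bar u$ and subtracting the population equation gives
\[
 a:=F_S(s_1,\bar u)-F_S(s_0,u_0)-\bigl[F(s_1,\bar u)-F(s_0,u_0)\bigr]
 =\frac1b\sum_{k}\xi_k,
\]
with
\[
 \xi_k=\bigl(F_{i_k}(s_1,\bar u)-F_{i_k}(s_1,u_0)\bigr)+p_{i_k}-\E[\cdot].
\]
These are i.i.d.\ centered vectors, because $\bar u$, $u_0$, $s_0$ and $s_1$ are all fixed by the conditioning. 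We split each summand with \eqref{eq:localoscillation}:
\[
 F_i(s_1,\bar u)-F_i(s_1,u_0)=\mathcal H_i\bar d+e_i,
 \qquad \norm{e_i}\le\frac{\mu_0}{32}\norm{\bar d}.
\]
This gives three centered pieces: $(\mathcal H_i-H)\bar d$, $e_i-\E e_i$, and $p_i-\E p_i$.

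We measure each piece in the $H^{-1}$ norm, using $H\succeq\mu_0 I$ so that $\norm{\cdot}_{H^{-1}}\le\sqrt{2\kappa}\norm\cdot$.
\begin{itemize}
\item For the Hessian piece, the second moment is $\E\norm{\mathcal H_i\bar d}_{H^{-1}}^2\le\E\,\bar d^\top\mathcal H_i H^{-1}\mathcal H_i\bar d$. Since $\mathcal H_i\preceq2I$ and $H^{-1}\preceq\kappa' I$, this is at most $2\kappa'\,\bar d^\top H\bar d\lesssim\kappa Q$. The term is also almost surely bounded by $\lesssim\sqrt\kappa\norm{\bar d}\lesssim\kappa\sqrt Q$.
\item The $p$-piece has second moment at most $\kappa D^2$.
\item The $e$-piece has size at most $\sqrt\kappa\,\mu_0\norm{\bar d}/32\lesssim\sqrt Q/32$.
\end{itemize}
A vector Bernstein inequality in dimension $d$ (applied after the change of variables $H^{-1/2}$) then gives, outside probability $Cd\,e^{-c\Lambda}$,
\[
 \norm a_{H^{-1}}^2\lesssim\frac{\kappa\Lambda}{b}(Q+D^2)+\frac{\Lambda^2\kappa^2(Q+D^2)}{b^2}.
\]
Because $b\ge C\kappa\Lambda$, the second term is dominated by the first.

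Next we apply Lemma~\ref{lem:localinverse} around $\bar u$. Since $\bar u$ lies in the inner quarter, an ellipsoid about $\bar u$ of the required radius stays inside the ball as long as $Q+D^2$ is small. The sampled field $F_S(s_1,\cdot)$ is within the local curvature band of $H$ in the sense required by that lemma; we verify this at the end. Granting it, the lemma yields
\[
 \norm{u_S^\star-\bar u}_H\le2\norm a_{H^{-1}}.
\]

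The computed point $u_S$ has residual at most $\tau$. Strong monotonicity of $F_S$ in the $H$ geometry then gives
\[
 \norm{u_S-u_S^\star}_H\le2\norm{r}_{H^{-1}}\le2\sqrt{2\kappa}\,\tau.
\]
Squaring and using $(a+b)^2\le2a^2+2b^2$ yields \eqref{eq:pathE}, with the $\kappa^2\tau^2$ bound being generous.

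The main obstacle is that Lemma~\ref{lem:localinverse} needs the sampled field to satisfy $\tfrac12 H$-monotonicity and a $2H$ upper bound. The sample average $H_S=b^{-1}\sum\mathcal H_{i_k}$ is not automatically comparable to $H$. To handle this I would first apply matrix Bernstein to $H^{-1/2}(H_S-H)H^{-1/2}$. Its summands are bounded by $2\kappa'$, so with $b\ge C\kappa\Lambda$ the deviation is at most $1/4$ with the stated probability, giving $\tfrac34H\preceq H_S\preceq\tfrac54H$. The oscillation bound \eqref{eq:localoscillation} contributes at most $\mu_0/32\le H/32$ on top of this, so the inner products satisfy the two conditions of Lemma~\ref{lem:localinverse}. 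A union bound over the matrix and vector events completes the argument.
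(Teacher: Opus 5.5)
Your proposal is correct and follows the paper's proof essentially step for step. Both arguments use matrix concentration to get $\tfrac34H\preceq\mathcal H_S\preceq\tfrac54H$, vector Bernstein for the sampled residual at the sample-independent root $\bar u$, and Lemma~\ref{lem:localinverse} around $\bar u$. Two cautions: the paper uses matrix Chernoff, so if you use Bernstein instead, take the variance from $Y^2\preceq(2/\mu_0)Y$ for $Y=H^{-1/2}\mathcal H_iH^{-1/2}$, because the bound $\norm{Y}\le2/\mu_0$ alone costs a factor $\kappa$ in $b$. The paper also bounds $\E_i\norm{\chi_i}^2\le2\ip{\bar\chi}{\bar d}\le4Q$, where $\chi_i=F_i(s_1,\bar u)-F_i(s_1,u_0)$, by co-coercivity rather than by your linearization. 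Finally, it converts the solve error via the global bound $\norm{u_S-u_S^\star}\le\tau/\mu_0\le2\kappa\tau$ and $H\preceq2I$, which, unlike your $H$-geometry step, does not require $u_S$ to lie in the local ball.
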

\begin{proof}
The matrices $H^{-1/2}\mathcal H_iH^{-1/2}$ are positive semidefinite with mean $I$ and norm at most $2/\mu_0\le4\kappa$. Matrix Chernoff \citep{tropp} gives
\[
 \frac34H\preceq\mathcal H_S\preceq\frac54H
\]
except with probability $2d\exp(-cb/\kappa)$. By \eqref{eq:localoscillation}, the averaged remainder has dual norm at most $\norm{u-v}_H/32$, proving the empirical local inequalities of Lemma~\ref{lem:localinverse}.

At the population endpoint, set $\chi_i=F_i(s_1,\bar u)-F_i(s_1,u_0)$ and $\zeta_S=(p_S-\bar p)+(\chi_S-\bar\chi)$. Both endpoints are fixed before $S$. Co-coercivity and local comparison give $\E_i\norm{\chi_i}^2\le2\ip{\bar\chi}{\bar d}\le4Q$. Thus the centered summands $Z_i$, whitened by $H^{-1/2}$, satisfy
\[
 \E_i\norm{Z_i}^2\le C\kappa(D^2+Q),\qquad
 \norm{Z_i}\le C\kappa\sqrt{D^2+Q}.
\]
Here $\norm{\bar d}\le\sqrt{2\kappa Q}$ and $\norm{H^{-1/2}}\le\sqrt{2\kappa}$. Vector Bernstein from rectangular matrix Freedman \citep{freedman} gives
\[
 \norm{\zeta_S}_{H^{-1}}
 \le C\left(\sqrt{\frac{\kappa\Lambda}{b}}+
                 \frac{\kappa\Lambda}{b}\right)\sqrt{D^2+Q}.
\]
The input bounds verified below place this residual below the local inverse threshold at $\bar u$. Lemma~\ref{lem:localinverse} localizes the empirical root and bounds its $H$-error by twice the displayed quantity. The solve error is at most $2\kappa\tau$ in Euclidean norm, proving \eqref{eq:pathE}.
\end{proof}

After the correction, write $d_t=u_t-u_{t-1}$, $J_t=\norm{d_t}_{H_t}^2$, and $\eta_t=M_t-M_{t-1}$ for the independent evaluation error. The same conditional vector Bernstein calculation gives
\begin{equation}\label{eq:pathV}
 V_t:=\norm{\eta_t}_{H_t^{-1}}^2
 \le C\frac{\kappa\Lambda}{b}(J_t+D_t^2)
\end{equation}
with the same failure bound, conditional on the completed correction before $R_t$. The unwhitened component increments have second moment at most $C(J_t+D_t^2)$ and norm at most $C\sqrt{\kappa(J_t+D_t^2)}$.

\subsection{Evaluation martingales with a random energy budget}
\begin{lemma}[Predictable energy peeling]\label{lem:peeling}
Suppose an evaluation martingale is sampled in batches of size $b$. Before batch $t$, a nonnegative quantity $W_t$ is known. Each of its $b$ centered, scaled summands has norm at most $C\sqrt{\kappa W_t}/b$, and the batch conditional second-moment sum is at most $CW_t/b$. Suppose also $\sum_tW_t\le W_{\max}$ for a deterministically bounded stopped process. If $b\ge C\kappa\Lambda$, then, simultaneously over epoch prefixes, except with a probability bounded by the union of $O(\log(e+W_{\max}/w_0))$ matrix-Freedman tails,
\begin{equation}\label{eq:peeling}
 \norm{M_t}^2\le\frac{C\Lambda}{b}\sum_{s\le t}W_s+\nu^2,
 \qquad w_0=\frac{b\nu^2}{C'\Lambda}.
\end{equation}
The constant $C'$ is sufficiently large; enlarging the constant in the tail threshold if necessary gives each tail probability at most $d\exp(-c\Lambda)$.
\end{lemma}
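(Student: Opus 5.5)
We prove Lemma~\ref{lem:peeling} with a peeling argument over dyadic energy levels, applying the matrix Freedman inequality at each level. The variance proxy $\sum_{s\le t}W_s$ is random. Each $W_s$, however, is known before batch $s$ is drawn, so the running energy $\mathcal W_t:=\sum_{s\le t}W_s$ is predictable. This lets us replace the random budget by deterministic thresholds through stopping times.

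\textbf{Step 1 (dilation and stopping).} Embed each vector summand $Z$ in the Hermitian dilation $\begin{pmatrix}0&Z\\ Z^\top&0\end{pmatrix}$ of dimension $d+1$. Its operator norm equals $\norm Z$, and its square is controlled by $\norm Z^2$. Fix $w_k=2^kw_0$ for $k=0,1,\dots,K$ with $K=\lceil\log_2(e+W_{\max}/w_0)\rceil$. Define the predictable stopping time $\sigma_k=\min\{t:\mathcal W_{t+1}>w_k\}$; it is determined before batch $t+1$ is drawn. The stopped martingale $M_{t\wedge\sigma_k}$ then has predictable quadratic variation at most $Cw_k/b$ by the batch second-moment hypothesis, summed over batches. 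Its individual summands have norm at most $C\sqrt{\kappa W_t}/b\le C\sqrt{\kappa w_k}/b$.

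\textbf{Step 2 (Freedman at each level).} Apply matrix Freedman \citep{freedman} to the stopped dilated martingale with variance bound $\sigma^2=Cw_k/b$ and range $R=C\sqrt{\kappa w_k}/b$. Outside probability $(d+1)\exp(-c\Lambda)$, we obtain uniformly in $t$
\[
 \norm{M_{t\wedge\sigma_k}}\le C\Bigl(\sqrt{\Lambda w_k/b}+\Lambda\sqrt{\kappa w_k}/b\Bigr)\le C\sqrt{\Lambda w_k/b}.
\]
The last inequality uses $b\ge C\kappa\Lambda$, which gives $\Lambda\sqrt{\kappa}/b\le\sqrt{\Lambda/b}$. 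Freedman's maximal form holds simultaneously over $t$, and hence over all epoch prefixes. A union bound over the $K+1=O(\log(e+W_{\max}/w_0))$ levels gives the stated failure probability. The boundedness of the stopped process ensures $\mathcal W_t\le W_{\max}\le w_K$, so every trajectory falls within some level.

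\textbf{Step 3 (choosing the level).} Fix $t$. If $\mathcal W_t\le w_0$, use level $0$. Then $\sigma_0\ge t$, so $\norm{M_t}^2\le C\Lambda w_0/b=(C/C')\nu^2\le\nu^2$ once $C'$ is large. Otherwise choose $k$ with $w_{k-1}<\mathcal W_t\le w_k$. Then $\sigma_k\ge t$ and $\norm{M_t}^2\le C\Lambda w_k/b\le 2C\Lambda\mathcal W_t/b$. Combining the two cases gives \eqref{eq:peeling}. Enlarging the threshold constant turns each Freedman tail into $d\exp(-c\Lambda)$.

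\textbf{Main obstacle.} The delicate point is the stopping in Step~1. We must ensure that stopping at $\sigma_k$ preserves the martingale property and the conditional variance and range bounds. This requires $\{\sigma_k\ge t\}$ to be measurable before batch $t$, which holds precisely because $W_t$ is known before the batch is drawn. Otherwise, conditioning on the realized energy would bias the increments. The Bernstein-range term also needs the $\sqrt\kappa$ loss to be absorbed, which is where $b\ge C\kappa\Lambda$ enters.
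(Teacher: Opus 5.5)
Your proposal is correct and follows the paper's proof essentially step for step. Both arguments take deterministic dyadic levels $w_0,2w_0,4w_0,\ldots$ and stop predictably before any batch that would push the cumulative energy above the level. Both then apply matrix Freedman at each level, using $b\ge C\kappa\Lambda$ to absorb the range term, take a union bound over levels, and select the first level above the realized energy, with the $w_0$ floor producing the $\nu^2$ term. Your Hermitian dilation is the standard device behind the rectangular Freedman inequality that the paper applies to the vector viewed as a $d\times1$ matrix.
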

\begin{proof}
At a deterministic level $w$, stop before any batch that would raise cumulative $W$ above $w$. This is predictable before its evaluation draws. The stopped increments are bounded by $C\sqrt{\kappa w}/b$, with variance sum at most $Cw/b$. Rectangular matrix Freedman, applied to the vector as a $d$-by-$1$ matrix, gives uniformly over prefixes
\[
 \norm{M_t}\le C\left(\sqrt{\frac{w\Lambda}{b}}
                +\frac{\sqrt{\kappa w}\,\Lambda}{b}\right)
 \le C\sqrt{\frac{w\Lambda}{b}}.
\]
Peel over $w=w_0,2w_0,4w_0,\ldots$ beyond $W_{\max}$. For each realized prefix, the first level above its energy gives the same stopped process up to that prefix. This level is at most twice the energy when the energy exceeds $w_0$; otherwise the bound is $\nu^2$ for sufficiently large $C'$. A union bound proves \eqref{eq:peeling}.
\end{proof}

For the two trackers take $W_t=D_t^2+J_t$. For $C$, take
\[
 W^C_t=\kappa^2\norm{\Delta(x,z)_t}^2+\norm{\Delta v_t}^2.
\]
The unscaled centered increments have norm $O(\sqrt{W^C_t})$ and variance $O(W^C_t)$. The independent draws in $R_t$, conditional on the correction, permit the individual-index filtration required by the lemma.

\subsection{Localization and radius control}
The two trackers use the following scales:
\begin{center}
\begin{tabular}{c|c|c|c}
tracker & input bound $D_{\max}$ & population energy bound $Q_{\max}$ & actual displacement\\
\hline
$A$ & $\theta$ & $a_A\kappa\theta^2$ & $C_A\kappa\theta$\\
$B$ & $C_D\kappa^2\theta$ & $a_B\kappa^5\theta^2$ & $C_B\kappa^3\theta$
\end{tabular}
\end{center}
All constants are absolute.

\begin{lemma}[Localization and radius control]\label{lem:induction}
With the parameter choices above, there is an event $\mathcal T$ of probability at least $1-\delta$ on which all inner residual tests and movement/radius checks pass throughout the $T$ steps. On this event, the table holds, consecutive reference metrics are comparable within a factor two, and \eqref{eq:pathE}, \eqref{eq:pathV}, and \eqref{eq:peeling} hold for all relevant prefixes. The constants $c_\theta,C_\lambda,c_\tau,c_\nu,C_b$ can be chosen independently of $N,\kappa,\varepsilon,d$.
\end{lemma}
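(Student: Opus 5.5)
The proof will be an induction over outer steps $t=0,\dots,T$, carried out on the intersection of the good events of Lemmas~\ref{lem:localweighted}, \ref{lem:peeling} and the solver-confidence events. The hypothesis at step $t$ is that all checks up to $t$ passed, that the table bounds hold for every correction already made, and that the reference metrics $H^A_s,H^B_s$ are mutually comparable within a factor two. The probability bookkeeping comes first. There are at most $2T$ corrections, $T$ evaluation batches and $O(T)$ inner solves. Each correction and evaluation batch fails with probability at most $Cd\exp(-c\Lambda)$. Peeling needs $O(\log(e+W_{\max}/w_0))$ Freedman tails per epoch and tracker. Here $W_{\max}$ is deterministically polynomial in $\kappa,T,\varepsilon^{-1}$, because the safeguards cap $\norm{z_{t+1}-z_t}$ and $\norm{v_{t+1}}$ and because $\norm{x_{t+1}-x_t}\le\theta$. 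The choice of $\Lambda$ in \eqref{eq:confidence}, with $C_b$ and the absolute constants in the exponent large enough, then makes the union bound at most $\delta$. This fixes $\mathcal T$; what remains is deterministic on $\mathcal T$.

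For the inductive step I will treat the two trackers in turn, since each feeds its output into the next one's input.

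Tracker $A$: the context change is $x_t\to x_{t+1}$ with $\norm{x_{t+1}-x_t}\le\theta$, so $D\le\theta$ by $1$-Lipschitzness of $\nabla_y g_j$. The population corrected root differs from $u_{t-1}$ by the context shift plus the accumulated martingale error $M_t$ in $\widehat A$. Using $\norm{M}_{H^{-1}}^2\lesssim\kappa\Lambda W/b+\nu^2$ from \eqref{eq:peeling}, Lemma~\ref{lem:localinverse} gives $Q\le a_A\kappa\theta^2$ once $\theta$ is small. Then \eqref{eq:pathE} with $b\ge C_b\kappa q\Lambda^2$ gives $E\lesssim \theta^2/q$, and the Euclidean displacement is at most $\sqrt{2\kappa(Q+E)}\le C_A\kappa\theta$. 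Since $\theta=c_\theta\kappa^{-2}$, this is at most $1/(512\kappa)$ for $c_\theta$ small, so the movement check passes.

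Tracker $B$: the context $(x,z)$ moves by $O(\kappa\theta)$, and by Lemma~\ref{lem:fields} the fields are $O(\kappa)$-Lipschitz in context on $\norm v\le2\kappa$. This gives $D\le C_D\kappa^2\theta$. The same argument then gives $Q\le a_B\kappa^5\theta^2$ (one factor $\kappa$ from $\norm{H^{-1}}$) and displacement $C_B\kappa^3\theta$.

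For the radius check $\norm{v_{t+1}}\le2\kappa$, I will not sum displacements, which would grow with $T$. Instead I will compare against the exact root. Write $v_{t+1}=v^*(x_{t+1},z_{t+1})+$ error. By \eqref{eq:root-lipschitz}, $\norm{v^*}\le\kappa$. The error is controlled by the residual $\norm{B(w_{t+1})}\le\norm{\widehat B_{t+1}}+\norm{M^B}$. The residual of $\widehat B$ is at most $\tau$ after the corrections, and the peeled martingale gives $\norm{M^B}^2\lesssim \Lambda\kappa q\cdot\kappa^6\theta^2/b+\nu^2$, which is tiny. Strong monotonicity then gives error $\le 2\kappa\cdot o(1)$, well within $\kappa$.

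Metric comparability follows from Hessian Lipschitzness: consecutive references differ by $O(\kappa\theta+\kappa^3\theta/\lambda)\ll\mu$, because $\kappa\theta=c_\theta\kappa^{-1}\le\mu/8$. The same estimate verifies the oscillation condition \eqref{eq:localoscillation} on balls of radius $O(\kappa\theta)$, and these contain the ellipsoids required by Lemma~\ref{lem:localinverse}.

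Refresh steps reset the martingales, and the Katyusha residual tests hold on the solver events. The initial solves from $z_{\rm init},v_{\rm init}$ only contribute logarithms.

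The main obstacle is the circularity in bounding $Q$: the population root involves $\widehat A_t$, which carries the martingale error whose peeling bound depends on past energies $J_s$, which in turn depend on past $Q_s$. I will break this by using the within-epoch cumulative bound $\sum_{s\le t}W_s\le q\cdot C\kappa^{5}\theta^2$ from the induction hypothesis. Then $b\gtrsim\kappa q\Lambda$ makes the martingale contribution a small multiple of $\theta^2$, so it is absorbed into $a_A,a_B$ without enlarging them. Checking that the constants close in the order $c_\theta, C_\lambda,c_\tau,c_\nu,C_b$, independently of $N,\kappa,\varepsilon,d$, is where I expect the bookkeeping to be delicate.
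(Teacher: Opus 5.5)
Your overall structure matches the paper's: induction in sampling order, tracker $A$ then $B$, the table scales, metric comparison by Hessian Lipschitzness, and a union bound over $O(T)$ Bernstein events and $O(\Lambda)$ peeling levels. The radius step, however, fails as written.

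\textbf{The radius step.} You claim $\norm{\widehat B_{t+1}}\le\tau$ after the corrections. The tolerance $\tau$ controls only the sampled residual $r_t^v(v_{t+1})$, which is built from $S_t^v$. By contrast, $\widehat B_{t+1}$ is updated with the independent evaluation batch $R_t$. Index corrections by endpoints. The feedback identity \eqref{eq:nonlinearfeedback} then gives $\widehat B_t=k_t-\eta_t$ and $B(w_t)=M^B_{t-1}+k_t$, where $k_t=B(x_t,z_t,v_t)-B(x_t,z_t,\bar v_t)$ carries the sampling error of the correction batch. It satisfies $\norm{k_t}_{H_t^{-1}}\le2\sqrt{E_t}\lesssim\sqrt\beta\,\kappa^{5/2}\theta+\kappa\tau$, which is not $O(\tau)$.

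Suppose you add this term to your residual and apply global $(\mu/2)$-strong monotonicity. The resulting bound on its contribution to $\norm{v_t-v^*(x_t,z_t)}$ is $\kappa\norm{k_t}\lesssim\sqrt\beta\,c_\theta\kappa^{3/2}$. This is below $\kappa/16$ only if $b\gtrsim\kappa^2\Lambda$. The choice $b\asymp\kappa q\Lambda^2$ does not guarantee that when $q\Lambda\ll\kappa$, for instance $q=2$ when $N$ is slightly above $\kappa$.

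The missing step is to split at the population corrected root: $v_t-v^*=(\bar v_t-v^*)+e_t$.
\begin{itemize}
\item The first term is at most $2\kappa\norm{M^B_{t-1}}$, because $B(x_t,z_t,\bar v_t)=M^B_{t-1}$.
\item The second is bounded directly from \eqref{eq:pathE}: $\norm{e_t}\le\sqrt{2\kappa E_t}\lesssim\sqrt\beta\,\kappa^3\theta+\kappa^{3/2}\tau\le\kappa/16$.
\end{itemize}
This avoids the extra $\sqrt\kappa$.

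\textbf{The martingale estimate.} Your bound $\Lambda\kappa q\cdot\kappa^6\theta^2/b$ equals $c_\theta^2\kappa^2/(C_b\Lambda)$ under \eqref{eq:parameters}. It is not tiny, and $2\kappa\norm{M^B}$ would then be of order $\kappa^2$. Peeling must instead be fed the curvature-weighted budget $\sum_{s\le t}(D_{B,s}^2+J_{B,s})\le Cq\kappa^5\theta^2$. That gives $\norm{M^B_t}^2\le Cc_\theta^2/(C_b\Lambda)+\nu^2$, hence $\norm{M^B_t}\le1/32$.

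\textbf{The circularity you flag.} It is not an obstacle for localization. Since $\widehat F_t=k_t-\eta_t$, the next forcing is $a_{t+1}=\bar p_{t+1}+k_t-\eta_t$, and the accumulated martingale cancels. Localization therefore needs only the one-step bounds \eqref{eq:pathE} and \eqref{eq:pathV}; the cumulative martingale enters only the radius bound. Your version bounds the forcing by the context shift plus the accumulated martingale, which omits $k_t$. It still closes under $b\gtrsim\kappa q\Lambda$ once $k_t$ is included, because $E_t$ is a $\beta$-multiple of the table energy. But the same omission, treating the tracked estimate as if it were the $\tau$-residual, is what breaks your radius step.
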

\begin{proof}
Proceed in sampling order. For $A$, use radius $r_A=1/(128\kappa)$ about the old $z$, giving component Hessian oscillation at most $1/(128\kappa)$. For $B$, use radius $r_B=8\kappa$ about the old $v$. The lower and upper contributions to \eqref{eq:localoscillation} have Lipschitz bounds $8\kappa/\lambda$ and $1/\lambda$, whose sum is at most $\mu/128$ for large $C_\lambda$.

Assume the preceding bounds hold and $\norm{v_{t-1}}\le2\kappa$. Then
\begin{equation}\label{eq:nonlinearfeedback}
 F(s_t,\bar u_t)=M_{t-1},\qquad
 F(s_t,u_t)=M_{t-1}+k_t,\qquad
 \widehat F_t=k_t-\eta_t,
\end{equation}
where $e_t=u_t-\bar u_t$ includes the solve error and $k_t=F(s_t,u_t)-F(s_t,\bar u_t)$ satisfies $\norm{k_t}_{H_t^{-1}}\le2\norm{e_t}_{H_t}$. Hence the next population forcing is
\[
 a_{t+1}=\bar p_{t+1}+k_t-\eta_t.
\]
The preceding displacements bound consecutive reference differences by $\theta+C_A\kappa\theta$ for $A$. For $B$, the bound is
\[
 \theta+C_A\kappa\theta+C_B\kappa^3\theta/\lambda.
\]
Small $c_\theta$ and large $C_\lambda$ make these small multiples of $\mu$. Metric comparison and the bound on $k_t$ give
\begin{equation}\label{eq:localforcing}
 \norm{a_{t+1}}_{H_{t+1}^{-1}}^2
 \le C\big(\kappa D_{t+1}^2+E_t+V_t\big).
\end{equation}
At an epoch start, the initial residual replaces the last two terms by $O(\kappa\tau^2)$.

Put $\beta=C\kappa\Lambda/b$. If the preceding population energy is at most $Q_{\max}$, \eqref{eq:pathE} and \eqref{eq:pathV} give
\[
 E_t\le\beta(Q_{\max}+D_{\max}^2)+C\kappa^2\tau^2,
 \quad J_t\le2Q_{\max}+2E_t,
 \quad V_t\le\beta(J_t+D_{\max}^2).
\]
For $A$, take $D_{\max}=\theta$, choose $a_A$ larger than twice the forcing constant, and take $\beta$ small enough to absorb its coefficient. Since $\kappa^2\tau^2\ll\kappa\theta^2$, the dual forcing is $O(\sqrt\kappa\theta)$, below $r_A\sqrt{\mu/2}/16$ for small $c_\theta$. Lemma~\ref{lem:localinverse} gives $Q_t\le a_A\kappa\theta^2$ and localizes the population root. Lemma~\ref{lem:localweighted} then localizes the empirical root and gives \eqref{eq:pathE}; small $\beta,c_\tau$ keep its ellipsoid inside the ball. Thus $\norm{\Delta z_t}\le C_A\kappa\theta<1/(512\kappa)$.

Conditional on this $z$ correction, draw $S^v_t$ independently. The context bound gives $D^B_t\le C_D\kappa^2\theta$. The same argument with large $a_B$ bounds the dual forcing by $O(\kappa^{5/2}\theta)$, population energy by $a_B\kappa^5\theta^2$, and displacement by $C_B\kappa^3\theta$. Since $\kappa^{5/2}\theta=c_\theta\sqrt\kappa$ and $r_B\sqrt{\mu/2}\asymp\sqrt\kappa$, these lie inside the $B$ ball. The curvature comparisons and concentration ellipsoids hold before the next evaluation batch.

To prove the radius, apply Lemma~\ref{lem:peeling} to the evaluation process stopped before any correction that has failed the preceding induction. On every active prefix,
\[
 \sum_{s\le t}(D_{B,s}^2+J_{B,s})\le Cq\kappa^5\theta^2,
 \qquad
 \norm{M^B_t}^2\le C\frac{\Lambda q\kappa^5\theta^2}{b}+\nu^2
 \le \frac{C c_\theta^2}{C_b\Lambda}+\nu^2.
\]
Choose the constants to make $\norm{M^B_t}\le1/32$. Strong monotonicity and the first identity of \eqref{eq:nonlinearfeedback} imply
\[
 \norm{\bar v_t-v^*(x_t,z_t)}\le2\kappa\norm{M^B_{t-1}}.
\]
Furthermore, \eqref{eq:pathE} and the energy cap give
\[
 \norm{e^B_t}\le C\sqrt\beta\,\kappa^3\theta+C\kappa^{3/2}\tau\le\kappa/16.
\]
Together with $\norm{v^*}\le\kappa$, this gives $\norm{v_t}\le\kappa+\kappa/16+\kappa/16<2\kappa$, using only the preceding martingale value.

For the stopped process, suppress an evaluation before drawing $R_t$ whenever its correction violates a local bound. If an evaluation inequality fails, include that evaluation and stop before the next correction. The draws remain conditionally centered, and active inputs have deterministic energy ceilings $Cq\kappa\theta^2$, $Cq\kappa^5\theta^2$, and $Cq\kappa^6\theta^2$ for $A,B,C$. Thus Lemma~\ref{lem:peeling} applies without conditioning on future events.

There are $O(T)$ concentration events and epoch martingales. The polynomial energy ceilings and floor give $O(\log(e+W_{\max}/w_0))=O(\Lambda)$ peeling levels under \eqref{eq:parameters}--\eqref{eq:confidence}. Conditional union bounds, with a large constant in each tail threshold, give total probability at most $\delta/2$. Allocate $\delta/2$ to capped solver failures. On the joint complement the induction continues throughout, agrees with the algorithm, and all safeguards pass.
\end{proof}

\section{Proof of the Epoch Estimate}\label{app:epoch}
Index within-epoch transitions by their endpoints, $\Delta u_t=u_t-u_{t-1}$. The nonnegative final outer movement may be added to the right-hand side.
\begin{proof}
Use the energies in \eqref{eq:energy-definitions} and $V_t$ in \eqref{eq:pathV}. For one tracker the preceding lemmas give
\begin{align*}
 E_t&\le\beta(Q_t+D_t^2)+C\kappa^2\tau^2,&
 J_t&\le2Q_t+2E_t,\\
 V_t&\le\beta(J_t+D_t^2),&
 Q_t&\le C\kappa D_t^2+C E_{t-1}+C V_{t-1},
\end{align*}
with an initial additive $C\kappa\tau^2$ and $\beta=C\kappa\Lambda/b$. Summing and substituting the bounds for $E_t,V_t,J_t$ leaves a coefficient $C\beta<1/2$ on $\sum_t(Q_t+J_t)$. Absorbing it gives, for $U=\sum_tD_t^2$,
\begin{equation}\label{eq:nonlinearenergy}
 \sum_t(Q_t+J_t)\le C\kappa U+C\kappa^2h\tau^2,
 \quad
 \sum_t\norm{d_t}^2\le C\kappa^2U+C\kappa^3h\tau^2.
\end{equation}
The identity $F(s_t,\bar u_t)=M_{t-1}$ and strong monotonicity give
\[
 \norm{u_t-u^*(s_t)}^2\le C\kappa^2\norm{M_{t-1}}^2+C\kappa E_t.
\]
Insert \eqref{eq:peeling} with $W_t=J_t+D_t^2$, sum over $h\le q$ times, and use $b\ge C\kappa q\Lambda$:
\begin{equation}\label{eq:nonlineartracking}
 \sum_t\norm{u_t-u^*(s_t)}^2
 \le C\frac{\kappa^3q\Lambda}{b}U
       +C\kappa^3h\tau^2+C\kappa^2h\nu^2.
\end{equation}
For $A$, $U_A\le X:=\sum_t\norm{\Delta x_t}^2$. Hence
\[
 \sum_t\norm{\Delta z_t}^2\le C\kappa^2X+C\kappa^3h\tau^2.
\]
For $B$, $U_B\le C\kappa^2\sum_t\norm{\Delta(x,z)_t}^2
\le C\kappa^4X+C\kappa^5h\tau^2$. Equations \eqref{eq:nonlinearenergy} and \eqref{eq:nonlineartracking} imply
\begin{align*}
 \sum_t\norm{\Delta v_t}^2&\le C\kappa^6X+C\kappa^7h\tau^2,\\
 \sum_t\norm{v_t-v^*(x_t,z_t)}^2
 &\le C\frac{\kappa^7q\Lambda}{b}X+C\kappa^7h\tau^2+C\kappa^2h\nu^2.
\end{align*}
The term $\kappa^8qh\Lambda\tau^2/b$ arising in the substitution is at most $C\kappa^7h\tau^2$ by the batch choice.

For the $C$ evaluation martingale, peeling and the movement estimates give
\[
 \sum_t\norm{M^C_t}^2
 \le C\frac{\kappa^6q\Lambda}{b}X+C\kappa^6h\tau^2+Ch\nu^2.
\]
Finally, Lemma~\ref{lem:fields} gives
\[
 \norm{\widehat G_t-\nabla H_\lambda(x_t)}
 \lesssim\norm{M^C_t}+\norm{v_t-v^*(x_t,z_t)}
       +\kappa^2\norm{z_t-z^*(x_t)}.
\]
Squaring, summing, and applying the $A$ tracking bound with multiplier $\kappa^4$ proves \eqref{eq:epoch-main}. The first full estimate has error at most $C\kappa^3\tau$, included in the tolerance term. Passing to the epoch's input iterates discards only the unused final tracking estimate.
\end{proof}

\section{Accuracy of the Returned Point}\label{app:accuracy}
Let $e_t=\widehat G_t-\nabla H_\lambda(x_t)$ and let $\mathcal U$ be the output set. Smoothness with $L_H\alpha\le1/2$, $\ip{\widehat G_t}{g_t}\ge\norm{g_t}^2$, and Young's inequality give \eqref{eq:main-descent}.

\begin{proof}[Accuracy]
On $\mathcal T$, the tolerance term in \eqref{eq:epoch-main} is at most $c_0h\varepsilon^2$ for arbitrarily small $c_0$ by choosing $c_\tau,c_\nu$. The actual epoch lengths sum to $T$, also when $q>T$. Since $\Delta x=-\alpha g$,
\[
 C\alpha^2\kappa^7q\Lambda/b
 =C c_\alpha^2\kappa q\Lambda/b
\]
is sufficiently small. Summing the epoch estimates and \eqref{eq:main-descent}, then absorbing the movement term, gives pathwise
\begin{equation}\label{eq:pathdescent}
 \sum_{t<T}\norm{g_t}^2\le C\bar\Delta_H/\alpha+C c_0T\varepsilon^2,
 \qquad
 \sum_{t<T}\norm{e_t}^2\le C\bar\Delta_H/\alpha+C c_0T\varepsilon^2.
\end{equation}
Each clipped step contributes $\Gamma^2$ to the first sum. Since $\Gamma\ge10\kappa$ and $T\asymp C_T\kappa^3\varepsilon^{-2}$, large $C_T$ and small $c_0$ give at most $T/2$ clipped steps. Hence $|\mathcal U|\ge T/2$ pathwise on $\mathcal T$. Conditional on the trajectory,
\[
 \E_{\rm output}\norm{\nabla H_\lambda(x_{\rm out})}^2
 \le\frac4T\sum_{t<T}(\norm{g_t}^2+\norm{e_t}^2)
 \le C\frac{\bar\Delta_H}{\alpha T}+C c_0\varepsilon^2
 \le\varepsilon^2/8.
\]
On every outcome, including failure, the returned point is either $x_0$ or an outer iterate within $T\theta$ of $x_0$. Smoothness and lower boundedness imply
\[
 \norm{\nabla H_\lambda(x_0)}^2\le2L_H\bar\Delta_H,
 \qquad
 \norm{\nabla H_\lambda(x_{\rm out})}^2\le B_T^2.
\]
Thus $\mathcal T^c$ contributes at most $\delta B_T^2\le\varepsilon^2/64$. Choose $C_\lambda$ so that $\norm{\nabla H_\lambda-\nabla\Phi}\le\varepsilon/4$. The squared triangle inequality proves \eqref{eq:main-guarantee}.
\end{proof}

\section{Inner Solves and Oracle Complexity}\label{app:cost}
Each sampled problem has $b$ strongly convex, smooth components, so a capped accelerated finite-sum solve costs
\[
 \Ot(b+\sqrt{b\kappa})=\Ot(b)
\]
gradient calls, including residual verification. For failure probability $p\le\delta/(C(T+1))$, run to mean objective gap at most $p\tau^2/(2L)$. The bound $\norm{\nabla\psi}^2\le2L(\psi-\psi^*)$ and Markov's inequality give residual failure probability at most $p$, at logarithmic extra cost.

For full $B$ solves, write the potential as the sum of the averages of
\begin{align*}
 P_i(v)&=\lambda f_i(x,z+v/\lambda)+\frac\mu8\norm v^2,\\
 Q_j(v)&=\lambda^2[g_j(x,z+v/\lambda)-g_j(x,z)]
          -\lambda\ip{\nabla_y g_j(x,z)}v-\frac\mu8\norm v^2.
\end{align*}
For $\lambda\ge8\kappa$, $P_i,Q_j$ have strong convexity at least $\mu/8,3\mu/4$ and absolute smoothness constants: the nonquadratic part of $P_i$ is $1/\lambda$-weakly convex. The quadratics cancel between the averages. Let
\[
 r_f=\lceil N/n\rceil,\quad r_g=\lceil N/m\rceil,\quad
 M=nr_f+mr_g\in[2N,3N].
\]
Average $r_f$ copies of each $(M/(nr_f))P_i$ and $r_g$ copies of each $(M/(mr_g))Q_j$. The weights lie in $[1,3]$, giving $O(N)$ components of condition number $O(\kappa)$ and $O(1)$ gradient cost. A common strongly convex quadratic may be reserved for the solver. For $N>\kappa$, full epoch solves therefore cost $\Ot(N+\sqrt{N\kappa})=\Ot(N)$.

The caps are uniform over accepted histories. Between refreshes, the checks bound $z$'s movement by $O(q/\kappa)$ and $\norm v$ by $2\kappa$. Paired increments cancel component linear offsets. The initial gradient of a shifted sampled $B$ problem is $O(\kappa q+\tau)$; for $A$ it is polynomial in $q,\kappa$. Strong convexity bounds initial gaps by squared gradient norms divided by $\mu$. Scheduled warm starts obey analogous bounds since $\Lip(z^*)\le\kappa$ and $\norm{v^*}\le\kappa$. At initialization, a full gradient bounds the initial gap by its squared norm divided by twice the strong convexity parameter. Thus observed data fix the caps, with only logarithmic dependence on the initial residual or distance.

There are at most $T$ ordinary steps and $1+\lfloor T/q\rfloor$ full starts. Failed checks terminate the run, so the capped cost is
\[
 \Ot\left(N+T\left(\frac Nq+b\right)\right)
 =\Ot\left(N+\sqrt N\kappa^{7/2}\varepsilon^{-2}\right).
\]
This proves Theorem~\ref{thm:main} for $N>\kappa$.

\subsection{The full-refresh regime and Proposition~\ref{prop:baseline}}
Set $q=1$ in Algorithm~\ref{alg:main}; no sampled corrections or recursive evaluations are performed. On the event that all full solves meet their residual tolerance, strong monotonicity gives
\[
 \norm{z_t-z^*(x_t)}\le\kappa\tau,
 \qquad \norm{v_t-v^*(x_t,z_t)}\le2\kappa\tau.
\]
Lemma~\ref{lem:fields} therefore yields
$\norm{\widehat G_t-\nabla H_\lambda(x_t)}\lesssim\kappa^3\tau$.
With $\tau=c_\tau\varepsilon\kappa^{-4}$, the error is an arbitrarily small multiple of $\varepsilon$. Appendix~\ref{app:accuracy}, total solver failure probability $\delta$, and the same penalty bias give \eqref{eq:criterion} in $T=O(\kappa^3\varepsilon^{-2})$ steps.

Each step costs $\Ot(N+\sqrt{N\kappa})$, including full gradients and residual checks, proving Proposition~\ref{prop:baseline}. For $N\le\kappa$,
\[
 N\kappa^3\le\sqrt N\kappa^{7/2},
\]
which proves Theorem~\ref{thm:main} in the remaining regime.

\section{Lower Bounds}\label{app:lower}
\subsection{Proof of Proposition~\ref{prop:lower}}
Let $h=n^{-1}\sum_i h_i$ be an individually $1$-smooth finite sum with initial gap at most one. Set $m=1$, $n=N-1$, and
\[
 f_i(x,y)=h_i(x),\qquad
 g_1(x,y)=\frac12(y_1^2+\kappa^{-1}y_2^2).
\]
All assumptions hold with normalized condition number exactly $\kappa$ and $\Phi=h$. Each upper query requires one component query to $h$; lower replies are known. Theorem 3.1 of \citet{peng} gives $\Omega(n+\sqrt n\eta^{-2})$ calls for unrestricted randomized queries and arbitrary outputs at sufficiently small constant failure probability. Since $\E\norm{\nabla h}^2\le\varepsilon^2$ implies
$\Pr\{\norm{\nabla h}>a\varepsilon\}\le a^{-2}$ for fixed $a>0$, take $\eta=a\varepsilon$ with sufficiently large $a$. The expected-budget extension covers random stopping. Substituting $n=N-1$ proves the rate over worst-case decompositions of total size $N$, rather than every prescribed $(n,m)$.

\subsection{An unrestricted randomized joint lower bound}\label{app:unrestricted-joint}
We modify the dense weak hiding chain of \citet{peng} to bound its average gradient, then amplify stationarity through an affine lower solution.

\begin{proposition}[Joint lower bound]\label{prop:unrestricted-joint}
There are absolute constants $c,\varepsilon_0>0$ such that, for
$n\ge1024$, $m\ge1$, $\kappa\ge2$, and
$0<\varepsilon\le\varepsilon_0$, every randomized component-gradient
algorithm satisfying \eqref{eq:criterion} on all problems obeying
Assumptions~\ref{ass:smooth}--\ref{ass:gap}, with $\Delta=1$, has
worst-case complexity at least
\begin{equation}\label{eq:unrestricted-joint-lower}
 c\bigl(n+m+\sqrt n\kappa^2\varepsilon^{-2}\bigr).
\end{equation}
This holds for fixed and worst-case expected budgets, arbitrary adaptive component indices and query points, repeated queries, and arbitrary measurable outputs. Dimension may grow with the parameters and budget. For prescribed $N=n+m$, take $m=1$ to obtain $\Omega(N+\sqrt N\kappa^2\varepsilon^{-2})$ for $N\ge1025$.
\end{proposition}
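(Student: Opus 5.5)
The lower bound has three separate parts: $n+m$, $\sqrt n\kappa^2\varepsilon^{-2}$, and their sum up to a factor two. The $\Omega(n+m)$ term is a standard hiding argument. Take $f_i$ to be a small bump placed at a random coordinate direction in only one unknown upper component, and do the same for one unknown lower component. Under Assumptions~\ref{ass:smooth}--\ref{ass:gap} this forces $\nabla\Phi(x_0)$ to have norm of order a constant, and an algorithm that has not queried the special index cannot locate it. Hence $\Omega(n)$ queries are needed, and $\Omega(m)$ symmetrically once $\varepsilon\le\varepsilon_0$.

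The main term comes from amplifying the single-level construction through an affine lower solution. Let $\{h_i\}$ be an individually smooth hard finite sum for the single-level problem, modified as needed. Set
\[
 g_1(x,y)=\tfrac12\norm{y-\kappa x}^2_{D},
\]
with $D$ chosen so that the normalized condition number is $\kappa$. The lower solution is then affine, $y^*(x)=\kappa P x$, and the upper objectives are $f_i(x,y)=\kappa^{-a}h_i(\kappa^{b}y)$, with exponents fitted so that $f_i$ stays $1$-smooth and $\norm{\nabla_y f}\le 1$. The two factors then combine: $\nabla\Phi=\kappa^{b+1-a}\nabla h$, so $\varepsilon$-stationarity of $\Phi$ becomes $\eta$-stationarity of $h$ with $\eta$ proportional to $\varepsilon\kappa^{-1}$, while the gap rescales to at most $1$. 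Substituting into the $\Omega(\sqrt n\eta^{-2})$ bound of \citet{peng} gives $\sqrt n\kappa^2\varepsilon^{-2}$.

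The global constraint $\norm{\nabla_y f}\le1$ is the reason the dense chain must be modified. Peng's construction has unbounded average gradients far from the origin, so I would compose each chain coordinate with a bounded smooth saturating function, such as an odd function equal to the identity on $[-2,2]$. This keeps the hard region intact, because the zero-chain progression only visits coordinates of bounded magnitude, while making the average gradient globally bounded. I would then recheck individual smoothness after the saturation.

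I expect the main obstacle to be keeping the randomized, adaptive-query lower-bound argument of \citet{peng} valid after these modifications. Concretely, I would check that the saturation does not leak information about the hidden orthogonal rotation, that the affine lower reply (which is known to the algorithm) gives no extra information, and that the weak-hiding probability bounds still hold when the dimension grows with the budget. Finally, the expected-budget and random-output extensions follow as in Proposition~\ref{prop:lower}, via Markov's inequality applied to $\norm{\nabla h}$. Taking $m=1$ then gives the stated $\Omega(N+\sqrt N\kappa^2\varepsilon^{-2})$.
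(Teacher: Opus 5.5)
Your architecture is the paper's: a bounded-gradient variant of Peng's chain, amplification through an affine lower solution, and separate needles. However, the amplifier you write down cannot exist in the normalized class, and this is the step that fixes the exponent on $\kappa$. For $g_1=\frac12\norm{y-\kappa x}_D^2$ the joint Hessian is $\begin{pmatrix}\kappa^2D&-\kappa D\\-\kappa D&D\end{pmatrix}$, whose norm is $(1+\kappa^2)\norm D$. Joint $1$-smoothness in Assumption~\ref{ass:smooth} therefore forces $\norm D\le(1+\kappa^2)^{-1}$, hence $\mu\le(1+\kappa^2)^{-1}$, and no choice of $D$ (or $P$) gives normalized condition number $\kappa$. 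More generally, if $g$ is jointly convex with joint Hessian $\preceq I$, the Schur complement gives $\nabla^2_{xy}g\,(\nabla^2_{yy}g)^{-1}\nabla^2_{yx}g\preceq\nabla^2_{xx}g\preceq I$. Then $\Lip(y^*)\le\sqrt\kappa$, and your reduction yields only $\sqrt n\,\kappa\,\varepsilon^{-2}$. The missing idea is that Assumption~\ref{ass:sc} requires strong convexity in $y$ only, so the $xx$ block may vanish. The paper takes $g_j=\frac\mu2\norm y^2-(1-\mu)\ip xy$, whose indefinite joint Hessian has eigenvalues $(\mu\pm\sqrt{\mu^2+4(1-\mu)^2})/2$ of modulus at most one. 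Nevertheless $y^*(x)=(\kappa-1)x$ and $\nabla\Phi(x)=(\kappa-1)\nabla F((\kappa-1)x)$, so no rescaling of the upper level is needed.

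Two other steps also fail as stated. First, composing coordinates with a bounded saturating $\sigma$ destroys what the chain argument needs: a large gradient at every point with an unfinished chain. Outputs are arbitrary, so it does not matter that the natural progression stays bounded. Since $\sigma'\to0$ at infinity and $\nabla h$ is bounded on the bounded range of $\sigma$, the gradient of $h\circ\sigma$ vanishes when all coordinates are large, and an algorithm can output such a point without any queries. The growing radial term in Peng's construction exists precisely to prevent this. The paper keeps that term coercive but replaces the quadratic by the Huber function $H_5$. Then every block gradient is at most $5\ell+35$, an unfinished block has gradient at least $5\ell-35/\sqrt{26}>3$ outside radius five, and the resulting $46\sqrt K$ bound is absorbed by the scaling.

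Second, small bumps cannot produce the $n$ and $m$ terms for a fixed $\varepsilon_0$. A $1$-smooth bump confined to one of $n$ components, with no known background, gives a $(1/n)$-smooth average with gap one. Its gradient is therefore at most $\sqrt{2/n}$ at $x_0$ and zero away from the bump. Likewise, a lower bump with $O(1)$ gradient over a $\mu$-strongly convex base moves $y^*$ by only $O(\kappa/m)$. The paper instead uses that Assumption~\ref{ass:smooth} bounds Hessians, not component gradients. For the upper needle it places a hidden linear term $n\tau\sigma x$ over the common weak quadratic $\frac\gamma2x^2$, with $\tau=2\varepsilon$ and $\gamma=2\varepsilon^2$. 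For the lower needle it places a hidden term $m\mu\sigma y$ in one lower component, with $f_i=\frac12H_1(x-y)$. At every output one sign then gives $|\nabla\Phi|\ge2\varepsilon$ for the upper needle and $|\nabla\Phi|\ge1/2$ for the lower needle.
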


\paragraph{A bounded-gradient hidden chain.}
Use the independent encoding tables and gate of \citet[Appendices A--C]{peng}. For $z=(z_1,\ldots,z_K)$, $z_t\in\R^D$, set
\[
 \Psi(z)=\frac{z}{\sqrt{1+\norm z^2}},\qquad
 q_t(z)=\ip{u_t}{\Psi(z)},\qquad
 \widetilde q_{it}(z)=\rho^{-1}\ip{s_{it}}{\Psi(z)}.
\]
The unit sign vectors $u_t,s_{it}$ satisfy $n^{-1}\sum_i s_{it}=\rho u_t$. Choose the least parity-compatible integer $m_\rho\ge\sqrt n$ and $\rho=m_\rho/n$, giving $n^{-1/2}\le\rho<17/(16\sqrt n)\le1/8$.
The nondecreasing gate $G$ obeys $G=G'=0$ below $5/32$, $G=1$ above $9/32$, $\norm{G'}_\infty\le17$, and $\norm{G''}_\infty\le259$. Replace the quadratic radial term by
\[
 H_5(z)=\begin{cases}\norm z^2/2,&\norm z\le5,\\
                         5\norm z-25/2,&\norm z>5,
          \end{cases}
 \qquad \nabla H_5(z)=z\min\{1,5/\norm z\},
\]
with zero gradient at $z=0$. For $\ell=33/16$, define
\begin{equation}\label{eq:bounded-hidden-chain}
\begin{aligned}
 \widetilde R_i(z)&=-\widetilde q_{i1}(z_1)
 -\sum_{t=2}^K G(q_{t-1}(z_{t-1}))
                  (\widetilde q_{it}(z_t)+1)\\
 &\quad+\ell\sum_{t=1}^K H_5(z_t),\qquad
 \widetilde R=\frac1n\sum_i\widetilde R_i.
\end{aligned}
\end{equation}
Then
\begin{equation}\label{eq:bounded-hidden-properties}
\begin{aligned}
 \Lip(\nabla\widetilde R_i)&\le H/\rho,
 &\widetilde R(0)-\inf\widetilde R&\le2K,\\
 \min_tq_t(z_t)<9/32&\ \Longrightarrow\ 
                    \norm{\nabla\widetilde R(z)}\ge\chi,
 &\sup_z\norm{\nabla\widetilde R(z)}&\le46\sqrt K,
\end{aligned}
\end{equation}
where $H=341$ and
$\chi=((1-(9/32)^2)^2-\ell(9/32))/\sqrt{1-(9/32)^2}>5/18$.
The $1$-Lipschitz Huber gradient preserves the $H/\rho$ bound by the block-row argument of their Lemma C.3. The gap bound follows from $-q_1-\sum_{t\ge2}G(q_{t-1})(q_t+1)\ge-2K$, $H_5\ge0$, and $\widetilde R(0)=0$.

The first unfinished block has gradient $\ell\nabla H_5(z_t)-\gamma_t\nabla q_t(z_t)$, where $1\le\gamma_t\le35$. Their Lemma A.6 gives the lower bound $\chi$ inside the radius-five ball; outside, $\norm{\nabla q_t(z_t)}\le(1+\norm{z_t}^2)^{-1/2}$ gives $5\ell-35/\sqrt{26}>3$. Each block gradient has norm at most $5\ell+35<46$, proving the final bound under the Lipschitz upper-gradient assumption in Assumption~\ref{ass:smooth}.

For $\eta>0$, set
\begin{equation}\label{eq:bounded-hidden-scaling}
 \beta=\frac{\rho\chi}{8H\eta},\qquad
 \alpha=\frac{64H\eta^2}{\rho\chi^2},\qquad
 K=\left\lfloor c_*\frac{\rho}{\eta^2}\right\rfloor,
 \qquad F_i(y)=\alpha\widetilde R_i(\beta y),
\end{equation}
where $0<c_*\le\chi^2/(256H)$ is an absolute constant. When $K\ge2$,
\eqref{eq:bounded-hidden-properties} gives
\[
 \Lip(\nabla F_i)\le1,\qquad F(0)-\inf F\le\frac12,\qquad
 \sup_y\norm{\nabla F(y)}
 \le\frac{46}{2\sqrt H}\sqrt\rho<1,
 \quad F=\frac1n\sum_iF_i.
\]
An unfinished block gives $\norm{\nabla F}\ge8\eta$. The table-independent replacement preserves exact reply locality and conditions (R1)--(R4) of their Appendix B. Their optimized-bias Lemma B.2, Theorem B.5, Corollary B.6, and randomized conversion in Appendix F.1 therefore give, for every randomized algorithm with at most $c_1\sqrt n\eta^{-2}$ component calls, a fixed instance satisfying
\begin{equation}\label{eq:bounded-hidden-failure}
 \Pr\{\norm{\nabla F(\widehat y)}\ge4\eta\}\ge11/16.
\end{equation}
The imported information argument charges $r=\lfloor n/512\rfloor$ calls per revealed stage over $K=\Theta(1/(\sqrt n\eta^2))$ stages, reserving the final stage for an output that need not have been queried. Their logarithmic choice of $D$ controls premature alignment at arbitrary query points; the argument thus applies to this modified chain, without asserting a theorem for arbitrary bounded-gradient functions.

\begin{proof}[Proof of Proposition~\ref{prop:unrestricted-joint}]
Take $\mu=\kappa^{-1}$, $s=\kappa-1$, and the construction above with $\eta=\varepsilon/s$. Set
\begin{equation}\label{eq:affine-lower-amplification}
 f_i(x,y)=F_i(y),\qquad
 g_j(x,y)=\frac\mu2\norm y^2-(1-\mu)\ip xy
 \quad(j=1,\ldots,m).
\end{equation}
Each $f_i$ is jointly $1$-smooth, $\norm{\nabla_yf}\le1$, and the lower Hessian is
\[
 \begin{pmatrix}0&-(1-\mu)I\\-(1-\mu)I&\mu I\end{pmatrix}.
\]
Its eigenvalues $(\mu\pm\sqrt{\mu^2+4(1-\mu)^2})/2$ have modulus at most one, and its $yy$ block is $\mu I$. Lower smoothness, componentwise strong convexity, and Hessian Lipschitzness follow. Also $y^*(x)=sx$, $\Phi(x)=F(sx)$, and the initial gap is at most $1/2$.

Simulate each upper query at $(x,y)$ by one query to $F_i$ at $y$; the known quadratic answers lower queries without accessing $F$. Preserve private randomness and return $\widehat y=sx_{\rm out}$. Since $\nabla\Phi(x)=s\nabla F(sx)$, \eqref{eq:bounded-hidden-failure} gives failure probability at least $11/16$ at threshold $4\varepsilon$ for budgets at most $c_1\sqrt n\,s^2\varepsilon^{-2}$. The iterative term follows from $s\ge\kappa/2$. If $K<2$, scaling gives $\sqrt n\kappa^2\varepsilon^{-2}=O(n)$, covered by the linear bound below.

For the $n$ term, draw independent uniform $i^*\in[n]$ and $\sigma\in\{-1,1\}$, and set
\[
 f_i(x,y)=\frac\gamma2x^2-n\tau\sigma\mathbf1_{\{i=i^*\}}x,
 \qquad g_j(x,y)=\frac\mu2y^2,\qquad
 \tau=2\varepsilon,\quad\gamma=2\varepsilon^2.
\]
All assumptions hold with gap $\tau^2/(2\gamma)=1$. Until $i^*$ is queried, the sign is unknown and one sign gives $|\gamma x-\tau\sigma|\ge2\varepsilon$ at any output. With at most $n/4$ calls, the index is missed with probability at least $3/4$, giving gradient failure probability at least $3/8$.

For the $m$ term, draw independent uniform $j^*\in[m]$ and $\sigma$, and set
\[
 f_i(x,y)=\frac12 H_1(x-y),\qquad
 g_j(x,y)=\frac\mu2y^2-m\mu\sigma\mathbf1_{\{j=j^*\}}y,
\]
where $H_1$ is the scalar radius-one Huber function. The joint upper gradient is one-Lipschitz, $|\nabla_yf|\le1/2$, $y^*=\sigma$, and the gap is $1/4$.
Since $\nabla\Phi(x)=\frac12\operatorname{clip}(x-\sigma,[-1,1])$, one sign has gradient magnitude $1/2$ at any output. At most $m/4$ calls give failure probability at least $3/8$. Take $\varepsilon_0\le1/8$ so that both needles contradict \eqref{eq:criterion}, and use that the maximum of three bounds is at least one third of their sum.

For worst-case expected budget $B$, cap the algorithm at $32B$ calls. Markov's inequality changes its output with probability at most $1/32$. Subtracting this from the fixed-budget bounds gives expected squared gradients at least $16(11/16-1/32)\varepsilon^2$ for the amplified family and $4(3/8-1/32)\varepsilon^2$ for the upper needle. The lower needle has the same probability bound at threshold $1/2$. Each contradicts \eqref{eq:criterion}; reducing the absolute constant proves the expected-budget claim.
\end{proof}

The square-root term uses the upper sum size $n$; for $m\gg n$ the construction gives no $\sqrt m$ term. When $n\asymp N$, a factor $\kappa^{3/2}$ remains relative to SICBO's upper bound. The conjectured joint $\kappa^{5/2}$ lower bound for unrestricted randomized algorithms remains open.

\section{Details of the Numerical Experiments}\label{app:numerics}
\subsection{Synthetic problem}\label{app:num-synthetic}
In \eqref{eq:comparison-family}, $w$ is a random unit vector. Each $H_j$ has eigenvalue $2\mu+I_j/4$ along $w$, with $\Pr(I_j=1)=4\mu$, and independently rotated eigenvalues in $[0.1,0.5]$ on the complement of $w$. Thus the component Hessians do not commute. $M$ has random singular vectors and geometrically spaced singular values from $0.6$ to $0.06$; $\norm{E_j}=0.08$; $b_j$, $t_i$ are Gaussian. Each $g_j(x,\cdot)$ is $\mu$-strongly convex with joint Hessian norm at most $0.94$ and $|\psi'''|\le\sqrt\mu$; each $f_i$ is $0.7$-smooth with $\norm{\nabla_yf}<1$.

Katyusha uses \citet{katyusha}'s parameters and snapshots for residual checks; the exact $\nabla\Phi$ used for plotting is uncounted. Relative to \eqref{eq:parameters}, we use $\alpha=1/\hat L$ with a sampled estimate $\hat L$ of $\Lip(\nabla\Phi)$, replace $C_b\Lambda^2$ by $1$, and monitor safeguards without enforcing them. The $z$ displacement bound was exceeded on $55$--$90\%$ of sampled steps; the $v$ bound always held and clipping never activated. Full-refresh and SICBO costs average ten seeds. The ratio of full-refresh to SICBO cost per outer iteration decreases with $\kappa$ approximately in proportion to $\sqrt{N/\kappa}$.

\subsection{Real data}\label{app:num-real}
For \eqref{eq:real-reg}, a9a and w8a use $5{,}000$ validation rows and the remainder for training; ijcnn1 uses its training and test files \citep{saba,srba}. At the tested $\kappa$, $\nabla^2_{yy}g_j\preceq(1/4+10\mu)I\preceq I$ and $\norm{\nabla_yf}\le1$. Strong convexity and $\norm{\nabla_y g(x,0)}\le1/2$ give $\norm{y^*(x)}\le\kappa/2$. The joint Hessian of $g_j$ contains $r_k'(x)y_k$ and $\tfrac12r_k''(x)y_k^2$, which are unbounded over $y$. Hence Assumption~\ref{ass:smooth} fails globally, and the experiments lie outside the normalized theoretical class. The choice $r_k(x)=e^{x_k}$ in \citet{saba,srba} does not permit prescribed $\kappa$.

We use $\tau=\max\{\varepsilon\kappa^{-4},10^{-11}\}$ and $\alpha=c/\hat L$, choosing $c\in\{1/4,1/2,1,2,4\}$ for each method and setting by the median cost over three seeds (one for deterministic methods). The selected value was $c=4$ in every setting: the bounded regularizers saturate and $\norm{\nabla\Phi(x_t)}$ decays roughly as $1/(\alpha t)$. The penalty methods stop after three times SICBO's mean cost at $c=1$; the penalty method never reached the target, and its accelerated variant reached it only on a9a and w8a. The accelerated penalty method and full-refresh benchmark approximate the same population outer update, with comparable per-iteration costs at $\kappa=100$ and differing by roughly $\sqrt{10}$ at $\kappa=1000$ (Table~\ref{tab:complexity}). Movement and radius safeguards were monitored without enforcement. The $z$ displacement bound was exceeded on 78--100\% of SICBO's sampled steps; the $v$ bound always held and clipping never activated.

For data hyper-cleaning on a9a, we flip $30\%$ of training labels and use $g_j(x,y)=\phi(b_j\ip{a_j}{y})/(1+e^{-x_j})+\frac\mu2\norm y^2$ with $x\in\R^m$ and $f_i$ from \eqref{eq:real-reg}. SICBO needed the fewest calls at both values of $\kappa$. Each $C_{ij}$ is supported on coordinate $j$; a sampled correction changes at most $b$ of the $m$ coordinates of $\widehat G_t$, yielding substantial departures from exact gradient descent. These objectives also fail global joint smoothness in Assumption~\ref{ass:smooth}.

\end{document}